\theoremstyle{plain}
\newtheorem{theorem}{Theorem}[section]
\newtheorem{identity}[theorem]{Identity}
\newtheorem{corollary}[theorem]{Corollary}
\theoremstyle{definition}
\newtheorem{remark}[theorem]{Remark}
\numberwithin{equation}{section}
\newcommand\mycom[2]{\genfrac{}{}{0pt}{}{#1}{#2}}
\numberwithin{equation}{section}
\begin{document}

\title{Some New and Old Gibonacci Identities}

    \author[P. J. Mahanta]{Pankaj Jyoti Mahanta}
    \address{Gonit Sora, Dhalpur, Assam 784165, India}
    \email{pankaj@gonitsora.com}
\author[M. P. Saikia]{Manjil P. Saikia}
\address{School of Mathematics, Cardiff University, Cardiff, CF24 4AG, UK}
\email{SaikiaM@cardiff.ac.uk, manjil@gonitsora.com}

\keywords{Gibonacci numbers, Fibonacci numbers, Lucas numbers, combinatorial proofs, tilings.}

\subjclass[2020]{11B39, 11B37, 05A19, 05A15, 11B75.}

\date{\today.}

\begin{abstract}
We present a different combinatorial interpretations of Lucas and Gibonacci numbers. Using these interpretations we prove several new identities, and simplify the proofs of several known identities. Some open problems are discussed towards the end of the paper.
\end{abstract}

\maketitle

\section{Introduction}

The Fibonacci and Lucas numbers, defined by the sequences
\[F_n=F_{n-1}+F_{n-2} \quad \text{and} \quad
L_n=L_{n-1}+L_{n-2},\] with initial values $F_0=0,F_1=1$ and
$L_0=2,L_1=1$ are two of the most widely studied number sequences
in all of mathematics. Both of them are special cases of the
Gibonacci sequence $\{G_n\}_{n\geq 0}$, defined by the same
recursion
\[
G_n=G_{n-1}+G_{n-2},
\]
now with free choice of the initial two values of $G_0$ and $G_1$.

Due to the rich interplay of the Fibonacci and Lucas numbers in number theory and combinatorics, there is a wide range of identities known for these two sequences. In fact, it can be shown easily that the two sequences are related by the following identity
\[
L_n=F_{n-1}+F_{n+1}.
\]
For the proof of this and several more identities we refer the reader to the books by Honsberger \cite{hons} and Koshy \cite{koshy}. The Fibonacci and Gibonacci numbers are also related by the following identity
\begin{equation}\label{fgeq}
    G_0F_{n-1}+G_1F_{n}=G_n,
\end{equation}
the proof of which can be found in Benjamin and Quinn's book \cite{bq}.

There is more than one way to combinatorially explain the Fibonacci sequence. One of these is to count the number of domino tilings of an $2\times n$ board using vertical and horizontal dominoes. Figure \ref{fig-t} shows an example of this. It is very easy to get the recursion of the Fibonacci numbers from this interpretation. Let us denote the number of domino tilings of the $2\times n$ board by $f_n$, then we see that $f_1=1$ and $f_2=2$ and from the recursion $f_n=f_{n-1}+f_{n-2}$ it now follows that $f_n=F_{n+1}$. This interpretation is essentially the same as the one given by Benjamin and Quinn \cite{bq}, and we shall use this without any comment in the sections to follow.

\begin{figure}[htb!]
\includegraphics[width=0.45\textwidth]{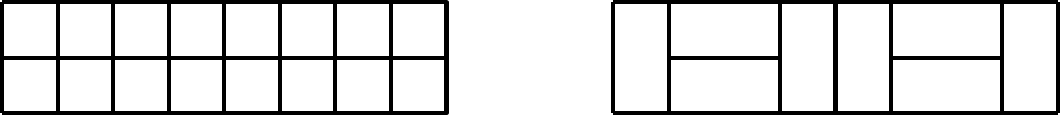}
\caption{A $2\times 8$ board on the left, the same board tiled using vertical and horizontal dominoes on the right.} \label{fig-t}
\end{figure}

There exist similar but slightly more complicated combinatorial interpretations of the Lucas sequence as well as the Gibonacci sequence. This is discussed in Chapter $2$ of Benjamin and Quinn's book \cite{bq}. The aim of this paper is to present simpler combinatorial interpretations of the Lucas and Gibonacci sequences and then use them to prove new and old identities. Our method is amenable to several techniques which have already been used successfully in the literature for proving identities related to Fibonacci numbers. Using these techniques we are able to prove several Gibonacci analogs of known Fibonacci identities, as well as other new identities involving the Gibonacci numbers.

In the following we shall always be concerned with domino tilings and hence we take the word `tiling' to mean `domino tiling' everywhere. It is also easy to see using a checkerboard representation (using two  colours) of the $2\times n$ board that every domino (both vertical or horizontal) will cover two squares of different colours. This observation will be used in several places in the following sections without mentioning it specifically.

This paper is arranged as follows: we present our interpretation of Lucas and Gibonacci numbers in Section \ref{sec:two}, then we use these interpretations to prove some new identities in Section \ref{sec:identity-new} and simplify as well as extend some known identities in Section \ref{sec:four}, in Section \ref{sec:five} we explore some further directions in which new identities can be derived but without going into too much detail, finally we end the paper with some concluding remarks in Section \ref{sec:six}.

\section{Combinatorial representations of Lucas and Gibonacci sequences}\label{sec:two}

In this short section we give new combinatorial representations of the Lucas and Gibonacci sequences.

\subsection{Combinatorial interpretation of Lucas numbers}\label{sec:cLucas}

We start with a $2 \times n$ board and then add two squares on it
marked $x$ and $y$ as shown in Figure \ref{fig1}. Let us call this
board $\mathcal{L}_n$, and we wish to tile this board with
dominoes. If a domino occupies the added squares marked $x$ and
$y$, then  the number of tilings equal $f_n$. And if a domino
occupies the added squares $x$ and the square directly below it,
then a domino is forced in the square marked $y$ and the square
below it, which in turn forces the bottom squares in columns $1$
and $2$. So in this case, total number of tilings equal $f_{n-2}$.
Hence the total number of tilings of $\mathcal{L}_n$ with dominoes
is $f_n+f_{n-2}=L_n$ for all $n\geq 1$.

\begin{figure}[htb!]
\includegraphics[width=0.39\textwidth]{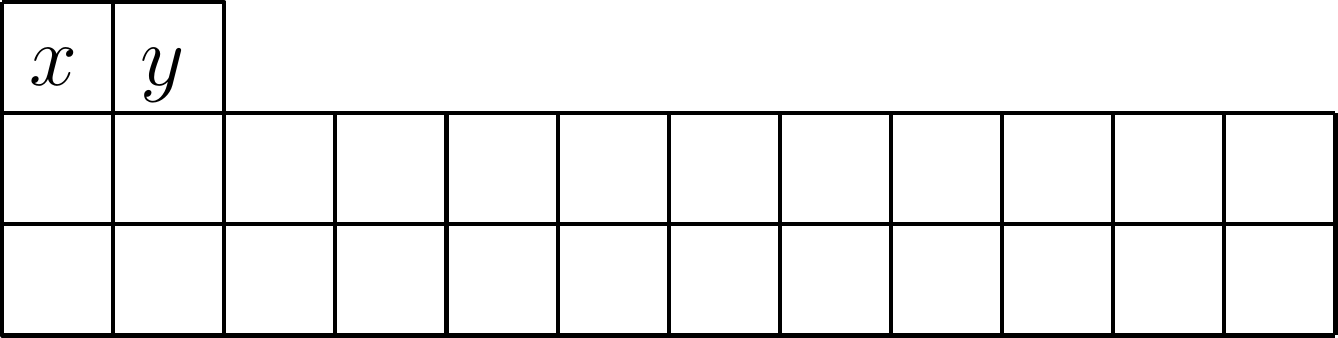}
\caption{$2\times 12$ board with two added squares on the top row.} \label{fig1}
\end{figure}

\begin{remark}\label{rem:lucas}
We can actually get the Lucas numbers recurrence directly from the
above combinatorial interpretation if instead of focusing on the
squares marked $x$ and $y$ we work from the end of the board and
see the behaviour of dominoes at the $n$-th column. The first few
values can then be easily calculated by enumerating all such
domino tilings to get $L_1=1$ (tiling a board with two squares in
the first row and one square each in the second and third row) and
$L_2=3$ (tiling a $3\times 2$ board).
\end{remark}

We now use this interpretation of Lucas numbers to prove two identities.

\begin{identity}\label{iden-lucas-1}
For all $n\geq 3$ we have
\[
L_n^2+f_n^2=f_{n-2}^2f_6+2f_{n-2}f_{n-3}f_4+f_{n-3}^2f_2.
\]
\end{identity}

\begin{proof}
We start with the board in Figure \ref{gigg6} which has six rows with $n$ squares in the first, second, fifth and sixth rows and $2$ squares in the third and fourth rows. We will count the number of tilings of this board in two different ways and arrive at the identity.

\begin{figure}[htb!]
\includegraphics[width=0.39\textwidth]{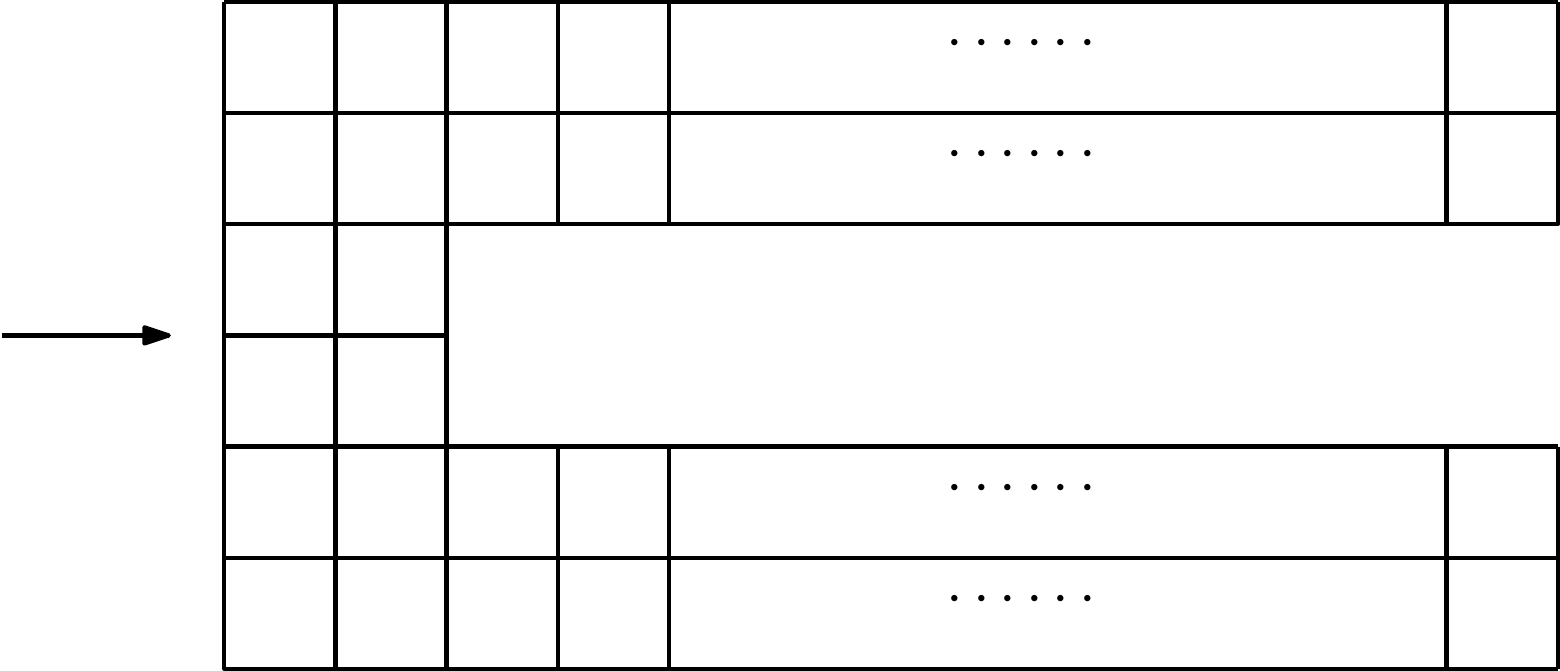}
\caption{Proof of Identity \ref{iden-lucas-1}.} \label{gigg6}
\end{figure}

First, notice that if we can break the board at the arrow then we get in total $L_n^2$ ways of tiling the whole board using dominoes. If we cannot break the board at the arrow then two vertical dominoes covers the squares above and below the arrow and this gives us $f_n^2$ many ways to tile the rest of the board with dominoes. So in total we can tile the whole board in $L_n^2+f_n^2$ many ways using dominoes.

Secondly, we look at the ways of tiling the board from the vertical direction now. Notice that there will be four different cases on which the total number of tilings of the whole board depend upon:
\begin{itemize}
    \item If no horizontal dominoes covers the second and third squares of rows $1, 2, 5$ and $6$.
    \item If a horizontal domino covers the second and third squares of row $1$, which will force another horizontal domino to cover the second and third squares of row $2$.
    \item If a horizontal domino cover the second and third squares of row $5$, which will force another horizontal domino to cover the second and third squares of row $6$.
\item If a horizontal domino covers the second and third squares of rows $1, 2, 5$ and $6$.
\end{itemize}
Counting the number of tilings with dominoes in each of the above cases gives us the right hand side of the identity.
\end{proof}

\begin{identity}\label{iden-lucas-2}
For all $n\geq 2$ and $m\geq 5$ we have,
\begin{equation}\label{eq:gg3}
L_n^2f_{m-6}+2L_nf_nf_{m-7}+f_n^2f_{m-8}=f_{n-2}^2f_m+2f_{n-2}f_{n-3}f_{m-2}+f_{n-3}^2f_{m-4}.
\end{equation}
\end{identity}

\begin{proof}
We start with a board with $m$ rows, where rows $1, 2, m-1$ and $m$ have $n$ squares each and all other rows have $2$ squares each, as in Figure \ref{gigg7}.
\begin{figure}[htb!]
\includegraphics[width=0.5\textwidth]{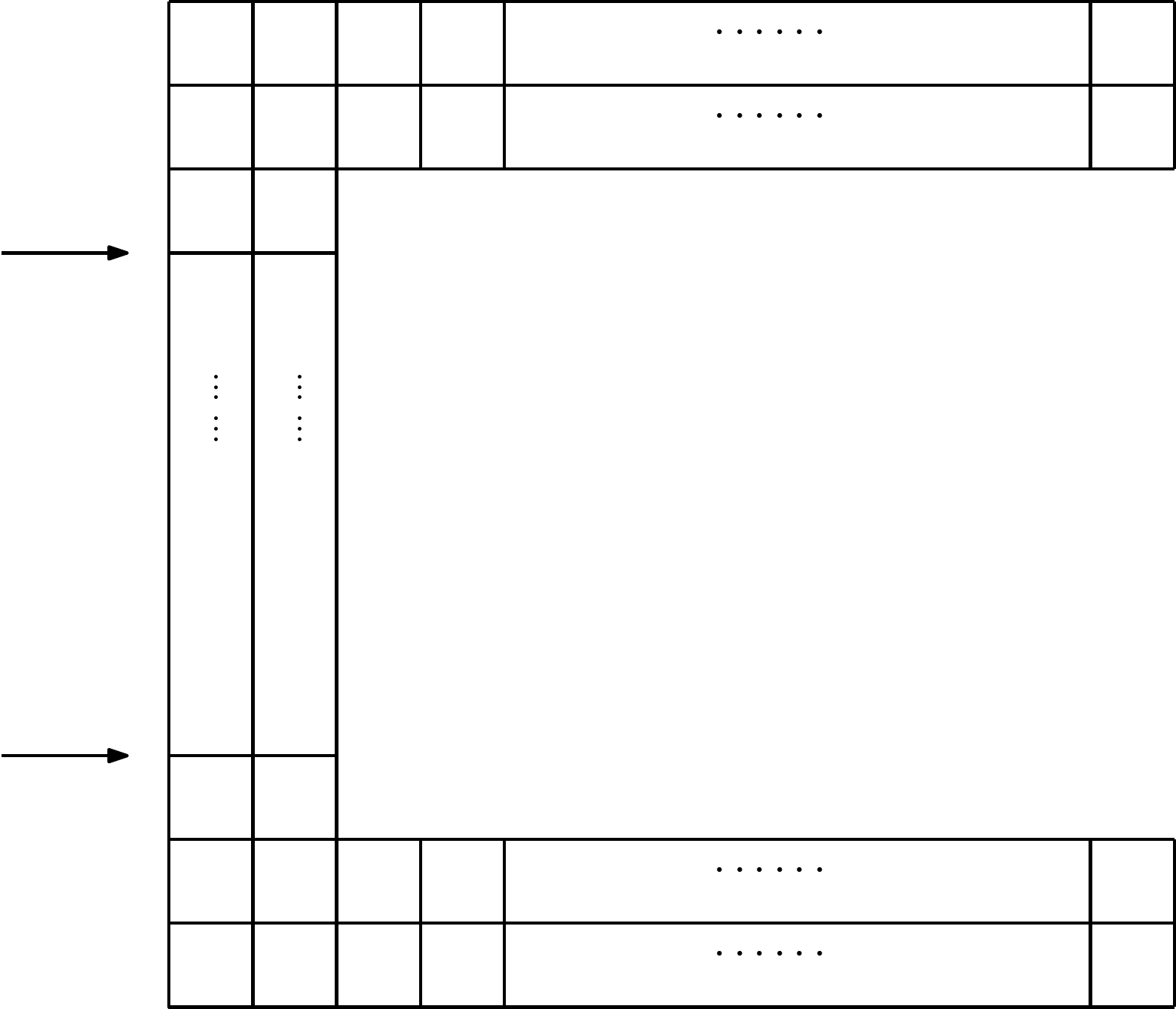}
\caption{Proof of Identity \ref{iden-lucas-2}.} \label{gigg7}
\end{figure}

The right hand side of the identity is easy to derive in a similar way as we did for Identity \ref{iden-lucas-1}.

For the left hand side, instead of looking at whether the board can be broken in one place, we now look at the cases where the board can be broken or not in two place, marked with two arrows here. This will give rise to the following cases:
\begin{itemize}
    \item The board can be broken at the top arrow.
    \item The board can be broken at the bottom arrow.
    \item The board can be broken at both the arrows, into three pieces.
    \item The board cannot be broken at either arrows.
\end{itemize}
Counting the number of ways of tiling the board with dominoes in each of the above cases, like we did for the left hand side of Identity \ref{iden-lucas-1} we shall arrive at our result.
\end{proof}

\begin{remark}
The identity \eqref{eq:gg3} holds for all integral values of $n$ and
$m$, but using the above combinatorial method it can be proved
when $m\geq 5$ and $n\geq 2$.
\end{remark}

\subsection{Combinatorial interpretation of Gibonacci numbers}\label{sec:cGibonacci}

We make a distinction in two cases, first we deal with the case when where $G_0>G_1$ and then we deal with the more general case.

\textbf{Case 1: $G_0>G_1$.} We still use the modified $2 \times n$
board as in Figure \ref{fig1}, but we count them with some
conditions. If a domino covers the squares $x$ and $y$ then we can
assign $G_1$ colours to that domino, and if a domino covers the
square $x$ and the square below it then we can assign $G_0-G_1$
colours to it. All the remaining dominoes are of one colour. A
similar argument as in Subsection \ref{sec:cLucas} will give us
the total number of domino tilings of this board to be
$G_1f_n+(G_0-G_1)f_{n-2}$, which by equation \eqref{fgeq} is $G_n$.

\begin{remark}
This interpretation implies the interpretation of the Lucas
numbers discussed in the previous subsection.
\end{remark}

\textbf{Case 2: General Case.} We use a $2\times n$ board where we mark three squares $x, y$ and $z$ as shown in Figure \ref{fig:gib-g}.We wish to count the number of domino tilings of this board with some conditions. Any domino which covers the squares marked $x$ and $y$ can be assigned $G_0$ colours, any domino which covers the squares marked $x$ and $z$ can be assigned $G_1$ colours, while the
remaining dominoes are of the same colour. With this assignment, it is now not difficult to see that the number of domino tilings of this board is $G_0f_{n-2}+G_1f_{n-1}$, which by equation \eqref{fgeq} is $G_n$.

\begin{figure}[htb!]
\includegraphics[width=0.39\textwidth]{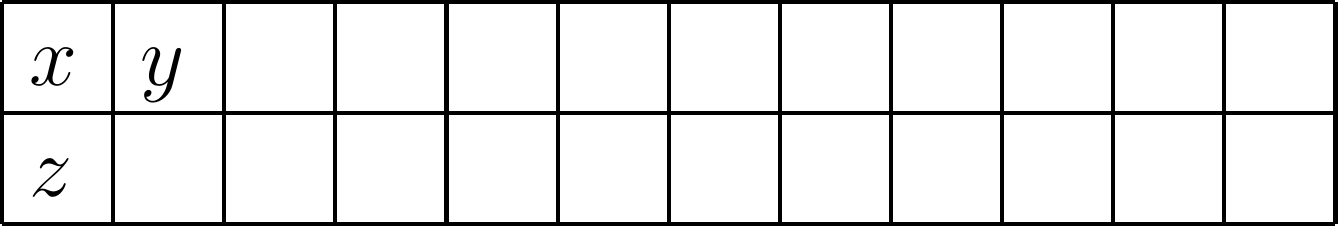}
\caption{$2\times 12$ board with marked squares.}
\label{fig:gib-g}
\end{figure}

We will use this interpretation of Gibonacci numbers in the succeeding sections to prove new identities as well as reprove some known identities.

\begin{remark}
As in the case for Lucas numbers (see Remark \ref{rem:lucas}), we can also get the recurrence for Gibonacci numbers directly from the above interpretation.
\end{remark}

\section{Some new identities involving Gibonacci numbers}\label{sec:identity-new}

Now that we have presented the combinatorial interpretation of Gibonacci numbers, we can use it to prove several new identities. This is done in this section.

\subsection{Identities involving one Gibonacci sequence}

\begin{identity}\label{th1}
For all $n\in \mathbb{N}$ and all $N\leq n$ we have,
\begin{equation}
G_NG_{n-N}+G_{N-1}G_{n-N-1}=G_1G_{n-1}+G_0G_{n-2}.
\end{equation}\label{eq:eq1}
\end{identity}

\begin{proof}
Let us mark a $2\times n$ board as shown in the Figure \ref{fig3}, where we have marked squares $a,b,c,x,y$ and $z$ as well as indicated the column $N$ by an arrow below it.

\begin{figure}[htb!]
\includegraphics[width=0.9\textwidth]{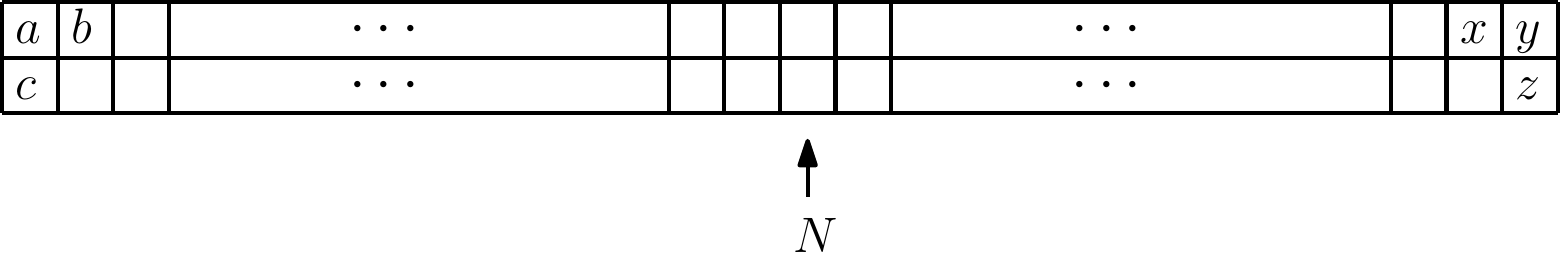}
\caption{$2\times n$ board with marked squares and column $N$ marked.} \label{fig3}
\end{figure}

We wish to count the number of domino tilings of this board with the following conditions:
\begin{itemize}
    \item The squares marked $a$ and $b$ can be covered by $G_0$ colors of dominoes,
    \item The squares marked $x$ and $y$ can be covered by $G_0$ colors of dominoes,
    \item The squares marked $a$ and $c$ can be covered by $G_1$ colors of dominoes,
    \item The squares marked $y$ and $z$ can be covered by $G_1$ colors of dominoes, and
    \item All the remaining dominoes are of same color.
\end{itemize}

Now we will count the number of domino tilings of this board in the following two
ways.

First, let us break the board at column $N$. From columns $1$ to $N$ the board can be tiled in $G_N$ many ways, while from columns $N+1$ to $n$ the board can be tiled in $G_{n-N}$ many ways. So, the total number of such tilings is $G_NG_{n-N}$. Now let two dominoes cover the columns $N$ and $N+1$ horizontally. In this case, the total number of tilings is $G_{N-1}G_{n-N-1}$. Therefore, the total number of tilings of the board is
$G_NG_{n-N}+G_{N-1}G_{n-N-1}$.

Second, let a
vertical domino covers the squares marked $y$ and $z$. Then the number of tilings is
$G_{n-1}G_1$. Now let a horizontal domino cover the squares marked $x$ and $y$. Then the total
tilings is $G_{n-2}G_0$. Therefore, the total tilings of the board is
$G_{n-1}G_1+G_{n-2}G_0$.

This proves our identity.
\end{proof}

\begin{remark}
Notice that the right hand side of the identity \eqref{eq:eq1} is independent of $N$.
\end{remark}

We denote the conditions on the $2\times n$ board in Figure \ref{fig3} by Figure \ref{fig4}. With the help of similar
figures we will prove the rest of identities without going into a detailed explanation of the conditions of colours on dominoes. Sometimes we will use Figure \ref{fig4} in our proofs where we will clearly state the values of $n$ and $N$ that we consider.

\begin{figure}[htb!]
\includegraphics[width=0.9\textwidth]{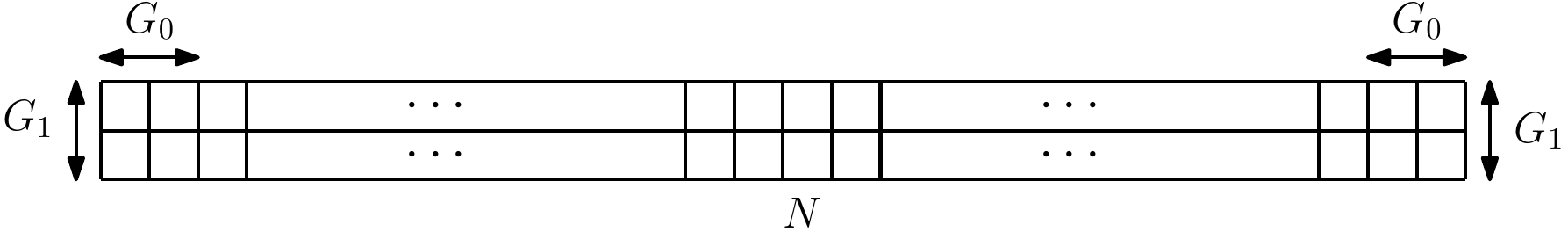}
\caption{Shortened representations of the conditions on an $2\times n$ board.} \label{fig4}
\end{figure}

\begin{corollary}
For all $n\in \mathbb{N}$, we have
\[
G_{n+1}^2+G_n^2=G_0G_{2n}+G_1G_{2n+1}.
\]
\end{corollary}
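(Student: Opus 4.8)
The plan is to derive this corollary directly from Identity~\ref{th1} by making an appropriate substitution in the indices. Recall that Identity~\ref{th1} states $G_NG_{n-N}+G_{N-1}G_{n-N-1}=G_1G_{n-1}+G_0G_{n-2}$ for all $n\in\mathbb{N}$ and all $N\le n$; the crucial feature, noted in the remark following it, is that the right-hand side does not depend on $N$. So the first step is simply to choose $N$ and a ``new'' $n$ so that the left-hand side becomes $G_{n+1}^2+G_n^2$.

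Concretely, I would apply Identity~\ref{th1} with the board parameter set to $2n+1$ (playing the role of the ``$n$'' in that identity) and with column index $N=n+1$. Then $G_N G_{(2n+1)-N}=G_{n+1}G_n$ and $G_{N-1}G_{(2n+1)-N-1}=G_n G_{n-1}$, which is not quite what we want. The better choice is to take the board parameter equal to $2n$ with $N=n$: then the left-hand side reads $G_n G_n + G_{n-1}G_{n-1}=G_n^2+G_{n-1}^2$, and the right-hand side is $G_1 G_{2n-1}+G_0 G_{2n-2}$. Shifting the index $n\mapsto n+1$ throughout (equivalently, applying Identity~\ref{th1} with board parameter $2n+2$ and $N=n+1$) yields exactly $G_{n+1}^2+G_n^2=G_1G_{2n+1}+G_0G_{2n}$, which is the claimed identity after trivially reordering the two terms on the right. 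One should check the side condition: Identity~\ref{th1} requires $N\le n$ (in its notation), i.e.\ here $n+1\le 2n+2$, which holds for all $n\in\mathbb{N}$, so there is no restriction beyond what is stated.

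Alternatively, and perhaps more in the spirit of the paper, one can give a direct combinatorial proof mirroring the proof of Identity~\ref{th1}: take the shortened board of Figure~\ref{fig4} with the same colour conditions, now with the chosen column marked at position $n+1$ on a board with $2n+2$ columns (so that both the left segment and the right segment have length $n+1$). Breaking at that column gives $G_{n+1}\cdot G_{n+1}$ tilings, while forcing a pair of horizontal dominoes across the break gives $G_n\cdot G_n$ tilings; summing gives the left-hand side. Counting the same board from the marked squares $x,y,z$ on the right end, as in the proof of Identity~\ref{th1}, gives $G_0 G_{2n}+G_1 G_{2n+1}$ (the $x,z$ horizontal domino contributes $G_1$ colours and leaves a board counted by $G_{2n+1}$, while the $x,y$ vertical domino contributes $G_0$ colours and leaves $G_{2n}$). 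Equating the two counts gives the corollary.

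There is essentially no obstacle here: the only thing to be careful about is getting the index bookkeeping right, in particular remembering that the combinatorial count $f_k$ and the algebraic $G_k$ are indexed so that a length-$k$ segment of the coloured board is tiled in $G_k$ ways (as established in Section~\ref{sec:cGibonacci}), and that the ``$n$'' of Identity~\ref{th1} must be replaced by $2n+2$, not $2n$, to land on $G_{n+1}^2+G_n^2$. I would write up the substitution-based proof in one or two sentences, since it is the shortest, and perhaps remark parenthetically that it also follows by repeating the combinatorial argument of Identity~\ref{th1} verbatim.
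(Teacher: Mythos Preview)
Your proposal is correct and matches the paper's own proof essentially verbatim: the paper also substitutes $N=n+1$ and replaces $n$ by $2n+2$ in Identity~\ref{th1}, and then remarks that one may alternatively argue combinatorially with a $2\times(2n+2)$ board. Your exploratory detours (trying $2n+1$ and $2n$ first) are unnecessary in the write-up, but the final argument is exactly what the authors do.
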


\begin{proof}
Putting $N=n+1$ and $n=2n+2$ in the identity \eqref{eq:eq1} we
will get this identity.

This identity can also be proven combinatorially using a $2\times (2n+2)$ board.
\end{proof}

\begin{identity}
For all $n\in \mathbb{N}$, we have
\[
G_{n+2}^2-G_n^2=G_0G_{2n+1}+G_1G_{2n+2}.
\]
\end{identity}

\begin{proof}
Taking a $2\times (2n+3)$ board as in Figure \ref{fig4} (we ignore $N$ for the moment), we see that the number of domino tilings of that board is
$G_0G_{2n+1}+G_1G_{2n+2}$.

Now, if we take $N=n+2$ and count the number of domino tilings as we did in the proof of Identity \ref{th1}, we see that there are $G_{n+2}G_{n+1}+G_{n+1}G_n=G_{n+2}G_{n+1}+G_{n+2}G_n-G_{n+2}G_n+G_{n+1}G_n=G_{n+2}^2-G_n^2$ such tilings.

This proves the identity.
\end{proof}

\begin{corollary}[Lucas]
For all $n\in \mathbb{N}$, we have
\[
F_{n+1}^2-F_{n-1}^2=F_{2n}.
\]
\end{corollary}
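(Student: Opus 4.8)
The plan is to specialize the identity just proven, namely
\[
G_{n+2}^2-G_n^2=G_0G_{2n+1}+G_1G_{2n+2},
\]
to the Fibonacci sequence. Recall that the Fibonacci numbers are the Gibonacci numbers with initial values $G_0=F_0=0$ and $G_1=F_1=1$, so that $G_k=F_k$ for every $k$.

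First I would substitute $G_0=0$ and $G_1=1$ into the displayed identity. The right-hand side collapses to $0\cdot F_{2n+1}+1\cdot F_{2n+2}=F_{2n+2}$, while the left-hand side becomes $F_{n+2}^2-F_n^2$. This yields $F_{n+2}^2-F_n^2=F_{2n+2}$. To get the stated form, I would simply reindex by replacing $n$ with $n-1$, which turns the equation into $F_{n+1}^2-F_{n-1}^2=F_{2n}$, exactly the claimed corollary.

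Alternatively, one can prove this directly from Identity \ref{th1}: setting $G_0=0$, $G_1=1$, $N=n+1$, and replacing $n$ by $2n+1$ in $G_NG_{n-N}+G_{N-1}G_{n-N-1}=G_1G_{n-1}+G_0G_{n-2}$ gives $F_{n+1}F_n+F_nF_{n-1}=F_{2n}$ after the appropriate relabeling; combining the left side using $F_{n+1}F_n+F_nF_{n-1}=F_n(F_{n+1}+F_{n-1})$ together with the telescoping manipulation $F_{n+1}F_n+F_nF_{n-1}=F_{n+1}^2-F_{n-1}^2$ (the same trick used in the proof of the preceding Identity) recovers the result.

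There is essentially no obstacle here: the corollary is a one-line specialization, and the only thing to be careful about is the index shift (whether one writes the intermediate step as $F_{n+2}^2-F_n^2=F_{2n+2}$ or directly as $F_{n+1}^2-F_{n-1}^2=F_{2n}$) and the domain of validity, since for $n=0$ the term $F_{n-1}=F_{-1}$ appears and one uses the standard convention $F_{-1}=1$. Since the excerpt phrases the statement for $n\in\mathbb{N}$, I would just note that the identity holds with the usual conventions and is immediate from the previous identity.
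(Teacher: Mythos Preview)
Your proof is correct and follows the same approach as the paper: specialize the preceding identity $G_{n+2}^2-G_n^2=G_0G_{2n+1}+G_1G_{2n+2}$ to the Fibonacci case and reindex. The only cosmetic difference is the choice of initial values: you take $G_0=0$, $G_1=1$ so that $G_k=F_k$ directly, whereas the paper takes $G_0=G_1=1$ (so $G_k=f_k=F_{k+1}$), obtains $f_{n+2}^2-f_n^2=f_{2n+1}+f_{2n+2}=f_{2n+3}$, and then translates via $f_k=F_{k+1}$; both routes are equally valid one-line specializations.
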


\begin{proof}
We take $G_0=G_1=1$ to get $f_{n+2}^2-f_n^2=f_{2n+1}+f_{2n+2}=f_{2n+3}$, which implies $f_n^2-f_{n-2}^2=f_{2n-1}$. Therefore, $F_{n+1}^2-F_{n-1}^2=F_{2n}$.
\end{proof}

We exploit a technique of Benjamin, Crouch and Sellers  \cite{BenjaminCrouchSellers} to prove the following result.

\begin{identity}\label{id:sellers-1}
For all $n\geq 2$, we have
\begin{equation}\label{eq:sellers}
    (G_1-G_0)\sum_{j=1}^{n-1}G_{2n-1-2j}+G_0\sum_{j=1}^{n-1}jG_{2n-1-2j}+2G_0G_1+G^2_0(n-2)=G_1G_{2n-2}+G_0G_{2n-3},
\end{equation}
and
\begin{equation}\label{eq:sellers-1}
    (G_1-G_0)\sum_{j=1}^{n-1}G_{2n-2j}+G_0\sum_{j=1}^{n-1}jG_{2n-2j}+G_0^2+(G_1+(n-1)G_0)G_1=G_1G_{2n-1}+G_0G_{2n-2}.
\end{equation}
\end{identity}

\begin{proof}
We prove \eqref{eq:sellers} in details. We start with a $2\times (2n-1)$ board as in Figure \ref{fig4}. Clearly the number of domino tiling of this board is $G_1G_{2n-2}+G_0G_{2n-3}$.

Now we focus on the second vertical domino that can occur in a tiling of this board. The possible positions of this second vertical domino are at positions $2, 4, \ldots, 2n-2$. Now, let $2j$ be the position of this second vertical domino. The right hand side of the board to this second vertical domino can be tiled in $G_{2n-1-2j}$ ways. In the left hand side of this second vertical domino, there is exactly one vertical domino and the rest are all horizontal dominoes. If the first vertical domino occurs in the first column then we can tile the right hand side in $G_1$ ways, otherwise there will be $j-1$ choices for placing this first vertical domino and then we an tile the left hand side in $(j-1)G_o$ ways. Thus, the total number of such tilings is $\sum_{j=1}^{n-1}G_{2n-1-2j}(G_1+(j-1)G_0)$.

Now, let us count the remaining tilings where the second vertical domino does not exist. That is, there is only one vertical domino. We get the following three cases:
\begin{itemize}
    \item The position of the vertical domino is at column $1$. This gives us $G_1G_0$ tilings.
    \item The position of the vertical domino is at column $(2n-1)$. This gives us $G_0G_1$ tilings.
    \item The position of the vertical domino is at one of the odd positions from column $3$ to $(2n-3)$. This gives us $G_0(n-2)G_0$ tilings.
\end{itemize}

Summing over all the above gives us the left hand side of \eqref{eq:sellers}.

To prove \eqref{eq:sellers-1}, we take a $2\times 2n$ board and do the same procedure, this time adding the tiling with no vertical domino which gives us $G_0^2$ tilings, as well as the tiling with the second vertical domino at column $2n$ which gives us $(G_1+(n-1)G_0)G_1$ tilings.
\end{proof}

\begin{remark}
Setting $G_1=G_0=1$ in \eqref{eq:sellers} and \eqref{eq:sellers-1} gives back Benjamin, Crouch and Sellers' \cite{BenjaminCrouchSellers} identities (2) and (3).
\end{remark}

Using techniques from Benjamin, Crouch and Sellers \cite{BenjaminCrouchSellers}, we can generalize Identity \ref{id:sellers-1} by considering the $p$-th vertical domino instead of the second vertical domino. This is done in the following theorem.

\begin{theorem}
For $p\geq 2, m\geq p-1$, we have
\begin{multline*}
    \sum_{\mycom{k=p}{k\equiv p \pmod 2}}^{m-1}\left(G_1\binom{(k+p-4)/2}{p-2}+G_0\binom{(k+p-4)/2}{p-1} \right)G_{m-k}\\+\left(G_1\binom{(m+p-4)/2}{p-2}+G_0\binom{(m+p-4)/2}{p-1}\right)G_1\\
    +\sum_{\mycom{t=0}{t\equiv m \pmod 2}}^{p-1}\left(G_1^2\binom{(m+t-4)/2}{t-2}+2G_0G_1\binom{(m+t-4)/2}{t-1}+G_0^2\binom{(m+t-4)/2}{t} \right)\\ =G_1G_{m-1}+G_0G_{m-2}.
\end{multline*}
The binomial coefficient $\binom{n}{r}$ is zero if $n$ is not an
integer.
\end{theorem}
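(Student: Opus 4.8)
The plan is to mimic the double-counting argument used for Identity~\ref{id:sellers-1}, but now tracking the position of the $p$-th vertical domino rather than the second one. Starting from a $2\times m$ board as in Figure~\ref{fig4}, the right-hand side $G_1G_{m-1}+G_0G_{m-2}$ is immediate. For the left-hand side, I would classify every tiling according to how many vertical dominoes it contains: either there are at least $p$ vertical dominoes, and then the $p$-th one sits at some column $k$ (which, because every maximal run of horizontal dominoes has even width, must satisfy $k\equiv p\pmod 2$ and $p\le k\le m-1$, or $k=m$ if the $p$-th vertical domino is the last column), or there are at most $p-1$ vertical dominoes.

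The main combinatorial lemma I need is the count of tilings of a $2\times c$ strip that contain exactly $r$ vertical dominoes with the last column being one of them (equivalently, configurations of $r$ vertical dominoes in a length-$c$ strip ending in a vertical one): this is $\binom{(c+r-2)/2}{r-1}$ by a standard stars-and-bars / lattice-path argument (each vertical domino occupies one column, the remaining $c-r$ columns are split into $r$ possibly-empty even-width blocks of horizontal dominoes, so we distribute $(c-r)/2$ pairs into $r$ boxes). Setting $c=k-1$ (the portion strictly left of the $p$-th vertical domino, which must end with the $(p-1)$-st vertical domino) gives $\binom{(k+p-4)/2}{p-2}$ for the subcase where the leftmost vertical domino is \emph{not} in column $1$, weighted by... — here I must be careful about the colour bookkeeping. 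The $x,y,z$-marking on the left end contributes the factor $G_1$ when the first column is a vertical domino and $G_0$ when it is covered by a horizontal domino together with column~$2$; translating the two subcases (leftmost vertical domino in column $1$ vs.\ not) into the weights $G_1\binom{(k+p-4)/2}{p-2}+G_0\binom{(k+p-4)/2}{p-1}$ is exactly the analogue of the $G_1+(j-1)G_0$ split in the proof of \eqref{eq:sellers}, and the portion to the right of column $k$ is an ordinary $2\times(m-k)$ board contributing $G_{m-k}$, which accounts for the first sum. The $k=m$ case (the $p$-th vertical domino is the last column) gives the second term, with the right-hand factor $G_1$ coming from a vertical domino in the final column.

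For the tilings with at most $p-1$ vertical dominoes I would condition on the exact number $t$ of them, $0\le t\le p-1$, with $t\equiv m\pmod 2$ forced by parity. Here both ends of the strip carry colour weights: the left end splits as $G_1$ (column $1$ vertical) or $G_0$ (column $1$ horizontal) and the right end similarly into $G_1$ or $G_0$, producing the quadratic pattern $G_1^2\binom{\cdot}{t-2}+2G_0G_1\binom{\cdot}{t-1}+G_0^2\binom{\cdot}{t}$ according to whether $0,1,$ or $2$ of the two extreme columns are vertical dominoes, and the binomial $\binom{(m+t-4)/2}{t-2}$ etc.\ counts the placements of the remaining vertical dominoes in the interior via the same stars-and-bars count as before. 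Summing the three contributions yields the left-hand side, and the convention that $\binom{n}{r}=0$ for non-integer $n$ (resp.\ $r<0$) silently kills the out-of-range subcases (e.g.\ $t\le 1$ in the quadratic term), so no separate edge-case analysis is needed.

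The step I expect to be the real obstacle is pinning down the index shifts in the binomial coefficients so they match the stated formula exactly — in particular getting the arguments $(k+p-4)/2$ and $(m+t-4)/2$ right, since these depend on precisely which columns are "used up" by the extreme vertical dominoes and on whether one works with the strip strictly-left-of / strictly-between the relevant vertical dominoes or including them. The cleanest way to avoid sign errors is to prove the auxiliary count once in the form "the number of ways to place $r$ vertical dominoes in a $2\times c$ strip, with column $c$ being the position of the last one, is $\binom{(c+r-2)/2}{r-1}$" (checking small cases $r=1,2$ against the proof of Identity~\ref{id:sellers-1}), and then apply it three times with bookkeeping for the end-colour weights; everything else is routine summation.
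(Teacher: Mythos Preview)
Your proposal is correct and follows essentially the same route as the paper: count tilings of the Figure~\ref{fig4} board by the location of the $p$-th vertical domino (or its absence when there are at most $p-1$ verticals), split the left-end contribution according to whether column~$1$ is vertical ($G_1$) or horizontal ($G_0$), and treat both ends symmetrically in the $t\le p-1$ case.

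One small slip worth fixing before you write it out: the portion strictly to the left of the $p$-th vertical domino does \emph{not} in general end with the $(p-1)$-st vertical domino --- there can be horizontal pairs between them --- so applying your lemma with $c=k-1$ is not quite right. The clean application is $c=k$, $r=p$ (the $p$-th vertical is itself the last column of the length-$k$ prefix), giving $\binom{(k+p-2)/2}{p-1}$ arrangements total, and \emph{then} conditioning on column~$1$ yields $G_1\binom{(k+p-4)/2}{p-2}+G_0\binom{(k+p-4)/2}{p-1}$ exactly as you state. Since you already arrive at the correct target expression via the column-$1$ split and explicitly flag the index bookkeeping as the point requiring care, this is a cosmetic correction rather than a gap.
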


\begin{proof}
We start with a $2\times m$ board as in Figure \ref{fig4}. The total number of domino tilings of this board is $G_1G_{m-1}+G_0G_{m-2}$.

Let us count the number of domino tilings with respect to the $p$-th vertical domino, if it exists. Let this vertical domino be located in the $k$-th column, where $p\leq k<m$. To the right of this, the board can be tiled in exactly $G_{m-k}$ ways. Before this $p$-th vertical domino we have $p-1$ vertical dominoes and $(k-p)/2$ horizontal dominoes in the first row, provided both $k$ and $p$ have the same parity, else no such tilings exist. So there are total $p-1+(k-p)/2=(k+p-2)/2$ many choices for placing the $p-1$ vertical dominoes. Like before, this will give us $\left(G_1\binom{(k+p-2)/2-1}{p-2}+G_0\binom{(k+p-2)/2-1}{p-1} \right)$ ways to tile the left hand side. This gives us the first summand in the left hand side of the identity. The second summand in the left hand side of the identity counts the number of tilings with $t$ vertical dominoes such that $t\leq p-1$. These tilings have $(m-t)/2$ horizontal dominoes in the first row of the tiling, with $m$ and $t$ being of the same parity. In total we have $(m+t)/2$ choices for the $t$ vertical dominoes and these gives rise to $\left(G_1^2\binom{(m+t)/2-2}{t-2}+2G_0G_1\binom{(m+t)/2-2}{t-1}+G_0^2\binom{(m+t)/2-2}{t} \right)$ ways to tile. The middle term of the LHS exists when the $p$-th vertical domino is located in the $m$-th column and will be non-zero only when $m\equiv p \pmod 2$. This gives us the required identity.
\end{proof}

We can actually use the above proof ideas to combine equations \eqref{eq:sellers} and \eqref{eq:sellers-1} and get the following identity, the proof of which is left to the reader.

\begin{identity}
For all $n\in \mathbb{N}$ we have
\begin{multline}
    (G_1-G_0)\sum_{j=1}^{\lfloor (n-1)/2\rfloor}G_{n-2j}+ G_0\sum_{j=1}^{\lfloor (n-1)/2\rfloor}jG_{n-2j}+ \left(G_1\binom{(n-2)/2}{0}+G_0\binom{(n-2)/2}{1}\right)G_1\\
   +\sum_{\mycom{t=0}{t\equiv n \pmod 2}}^1\left(2G_0G_1t+G_0^2\binom{(n+t-4)/2}{t}\right)=G_1G_{n-1}+G_0G_{n-2}.
\end{multline}
\end{identity}

We can also count with respect to the location of the $p$-th horizontal domino in the first row. This gives us the following result.

\begin{theorem}
For $p\geq 2, m\geq p-1$, we have
\begin{multline*}
    \sum_{k=2p}^m\left(G_1\binom{k-p-2}{p-1}+G_0\binom{k-p-2}{p-2}\right)G_{m-k}\\
    +\sum_{t=0}^{p-1}\left(G_1^2\binom{m-t-2}{t}+2G_0G_1\binom{m-t-2}{t-1}+G_0^2\binom{m-t-2}{t-2} \right)\\=G_1G_{m-1}+G_0G_{m-2}.
\end{multline*}
\end{theorem}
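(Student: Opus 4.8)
The plan is to count, in two ways, the colour-weighted domino tilings of the $2\times m$ board carrying the conditions of Figure~\ref{fig4}, exactly as in the preceding theorem, but now organising the tilings by the position of the $p$-th horizontal domino of the first row instead of the $p$-th vertical domino. As recorded there, the number of such tilings is $G_1G_{m-1}+G_0G_{m-2}$, which is the right-hand side, so everything reduces to recounting. Throughout, recall that a tiling of a $2\times r$ strip is a sequence of ``objects'', each being either a vertical domino (one column) or a block of two stacked horizontal dominoes (two columns).

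First I would treat the tilings in which the $p$-th horizontal block of the first row exists; say it occupies columns $k-1$ and $k$, with $2p\le k\le m$. To the right of column $k$ lies an unconstrained $2\times(m-k)$ strip still carrying the right-hand marked squares, hence completable in $G_{m-k}$ ways; when $k=m$ this block is itself the right-hand marked block and carries weight $G_0=G_{m-k}$, so the expression is uniform in $k$. To its left, columns $1,\dots,k-2$ hold the first $p-1$ horizontal blocks together with $k-2p$ vertical dominoes, a total of $k-p-1$ objects. Splitting on the left-hand marked squares, either a vertical domino sits in column $1$ (weight $G_1$), leaving $k-p-2$ objects of which $p-1$ are horizontal blocks, or columns $1,2$ form the first horizontal block (weight $G_0$), leaving $k-p-2$ objects of which $p-2$ are horizontal blocks. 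Hence the left part contributes $G_1\binom{k-p-2}{p-1}+G_0\binom{k-p-2}{p-2}$, and summing the product of this with $G_{m-k}$ over $2p\le k\le m$ yields the first sum of the identity. Unlike in the preceding theorem there is no parity restriction on $k$, since the vertical dominoes each occupy a single column.

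Next I would treat the tilings whose first row has fewer than $p$ horizontal blocks, say exactly $t$ of them with $0\le t\le p-1$; such a tiling then has $m-2t$ vertical dominoes and $m-t$ objects in all. Now \emph{both} ends may be marked, so there are four sub-cases according as each of column $1$ and columns $m-1,m$ is covered by a marked vertical domino (weight $G_1$) or a marked horizontal block (weight $G_0$). A direct count of the remaining $m-t-2$ objects in each sub-case produces $G_1^2\binom{m-t-2}{t}+2G_0G_1\binom{m-t-2}{t-1}+G_0^2\binom{m-t-2}{t-2}$, and summing over $0\le t\le p-1$ gives the second sum. Since these two families of tilings are complementary---first row with at least $p$, respectively fewer than $p$, horizontal blocks---their contributions add up to the total $G_1G_{m-1}+G_0G_{m-2}$, which is the claimed identity.

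The argument is essentially bookkeeping rather than conceptual; the point needing care is the shift of binomial upper indices caused by one object being ``spent'' on the end-mark split (the left region of $k-p-1$ objects contributes coefficients with upper entry $k-p-2$, and the $m-t$ objects of the short-first-row case contribute upper entry $m-t-2$), together with a check that the degenerate values $k=2p$, $k=m$, $m-k\in\{0,1\}$ and $t=0$ are correctly handled by the conventions that an empty strip counts as $G_0$ and that a binomial coefficient with a negative entry is zero.
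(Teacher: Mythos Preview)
Your proposal is correct and follows essentially the same approach as the paper's own proof: both count the weighted tilings of the $2\times m$ board of Figure~\ref{fig4} according to the location of the $p$-th horizontal domino in the first row, splitting into the case where it exists at columns $k-1,k$ (giving the first sum) and the case where the first row has at most $p-1$ horizontal blocks (giving the second sum). Your write-up is in fact more careful than the paper's about the end-mark bookkeeping and the degenerate values of $k$ and $t$, but the underlying argument is identical.
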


\begin{proof}
Like before the right hand side counts the $2\times m$ board as in Figure \ref{fig4}. Again, let us consider the $p$-th horizontal domino in the first row of such a tiling, if it exists. Let this domino cover the $k-1$ and $k$-th squares in the first row, where $k\geq 2p$. Then, there are $G_{m-k}$ ways to tile the right hand side of the $k$-th column. On the left hand side we have to arrange now $p-1$ horizontal dominoes in the first row and $k-2p$ vertical dominoes. This can be done in $\left(G_1\binom{(k-p-1)-1}{p-1}+G_0\binom{(k-p-1)-1}{p-2}\right)$ ways, which gives us the first summation. The second summation now counts the tilings with $t\leq p-1$  horizontal dominoes in the first row and $m-2t$ vertical dominoes. This can be done in $\left(G_1^2\binom{m-t-2}{t}+2G_0G_1\binom{m-t-2}{t-1}+G_0^2\binom{m-t-2}{t-2} \right)$ ways which gives us the second summation. This proves our identity.
\end{proof}

Instead of taking the board in Figure \ref{fig4}, if we take the board in Figure \ref{fig:gib-g} then we can easily prove the following results in a similar way.

\begin{identity}
For all $n\in \mathbb{N}$, we have
\[
\sum_{j=1}^{n-1}f_{2n-1-2j}(G_1+(j-1)G_0)+G_1+(n-1)G_0=G_{2n-1},
\]
and
\[
\sum_{j=1}^{n-1}f_{2n-2j}(G_1+(j-1)G_0)+G_1+nG_0=G_{2n}.
\]
Thus, we can combine them and write for all $n\in \mathbb{N}$
\[
\sum_{j=1}^{\lfloor (n-1)/2\rfloor}f_{n-2j}(G_1+(j-1)G_0)+G_1+\left\lfloor \frac{n}{2}\right\rfloor G_0=G_n.
\]
\end{identity}

\begin{identity}
For all $n\in \mathbb{N}$, we have
\[
\sum_{j=1}^{n-1}jG_{2n-1-2j}+G_1+(n-1)G_0=G_{2n-1},
\]
and
\[
\sum_{j=1}^{n-1}jG_{2n-2j}+G_1+nG_0=G_{2n}.
\]
Thus, we can combine them and write for all $n\in \mathbb{N}$
\[
\sum_{j=1}^{\lfloor (n-1)/2\rfloor}jG_{n-2j}+G_1+\left\lfloor \frac{n}{2}\right\rfloor G_0=G_n.
\]
\end{identity}

\begin{theorem}
For $p\geq 2, m\geq p-1$, we have
\begin{multline*}
    \sum_{\mycom{k=p}{k\equiv p \pmod 2}}^m\left(G_1\binom{(k+p-4)/2}{p-2}+G_0\binom{(k+p-4)/2}{p-1} \right)f_{m-k}\\
    +\sum_{\mycom{t=0}{t\equiv m \pmod 2}}^{p-1}\left(G_1\binom{(m-t-2)/2}{t-1}+G_0\binom{(m-t-2)/2}{t} \right)=G_m.
\end{multline*}
The binomial coefficient $\binom{n}{r}$ is zero is $n$ is not an
integer.
\end{theorem}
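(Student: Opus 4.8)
The plan is to run the very same ``$p$-th vertical domino'' argument used in the two preceding theorems, but on the one-sided board of Figure~\ref{fig:gib-g} instead of the two-sided board of Figure~\ref{fig4}. So I would start from a $2\times m$ board carrying the marked squares and colour weights of Figure~\ref{fig:gib-g}; by Subsection~\ref{sec:cGibonacci} the total weight of its domino tilings (a tiling being weighted by the number of colourings its dominoes admit) is $G_0 f_{m-2}+G_1 f_{m-1}=G_m$, which is the right-hand side. Everything that follows is just a reorganisation of this one weighted sum. The single feature of the board that drives the computation is the dichotomy at its left edge: either column $1$ carries a vertical domino, which then receives weight $G_1$, or columns $1$ and $2$ form a horizontal pair, which then receives weight $G_0$, and every other column contributes a factor $1$.

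Next I would classify the tilings by the position $k$ of the $p$-th vertical domino read from the left. If it exists, then the columns to its right form a plain $2\times(m-k)$ board contributing $f_{m-k}$; this is the only place where the argument departs from the earlier theorem, whose second Gibonacci end turned that factor into $G_{m-k}$, and it is also why the present first sum runs all the way to $k=m$ with no separate ``middle term'' — when $k=m$ the empty right block merely contributes $f_0=1$. To the left of the $p$-th vertical domino lie the first $p-1$ vertical dominoes together with $(k-p)/2$ horizontal pairs, which forces $k\equiv p\pmod 2$ and $p\le k\le m$; counting their interleavings while splitting off the two left-edge possibilities (first object a vertical domino, pulling out a factor $G_1$, versus first object a horizontal pair, pulling out a factor $G_0$) gives the coefficient $G_1\binom{(k+p-4)/2}{p-2}+G_0\binom{(k+p-4)/2}{p-1}$, exactly as in the two preceding proofs. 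Summing over $k$ produces the first sum.

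The tilings not yet counted are those carrying fewer than $p$ vertical dominoes, say exactly $t$ of them with $0\le t\le p-1$ and $t\equiv m\pmod 2$; here the whole board is an interleaving of $t$ vertical dominoes and $(m-t)/2$ horizontal pairs, and the same left-edge case analysis — which is simply the one-Gibonacci-end specialisation of the three-term coefficient in the earlier theorem — yields the $t$-th summand $G_1\binom{\frac{m+t}{2}-1}{t-1}+G_0\binom{\frac{m+t}{2}-1}{t}$ of the second sum. Equating $G_m$ with the sum of the two tallies then closes the argument; throughout, a binomial coefficient with a non-integral or negative upper entry is read as $0$, and one should spot-check the boundary instances $t=0$ and $k=m$. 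I expect the only genuine work to be exactly this binomial bookkeeping — getting the $\binom{\cdot}{p-2}$/$\binom{\cdot}{p-1}$ versus $G_1$/$G_0$ split and the parity windows right — since the combinatorial skeleton is identical to that of the earlier ``$p$-th vertical domino'' results, the sole new ingredient being the replacement of one Gibonacci-weighted end by an ordinary end.
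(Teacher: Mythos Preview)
Your approach is exactly the one the paper intends: it states only that the result follows ``in a similar way'' by replacing the two-sided board of Figure~\ref{fig4} with the one-sided board of Figure~\ref{fig:gib-g}, and you have carried out precisely that specialisation of the earlier $p$-th vertical domino argument, correctly noting that the right block now contributes $f_{m-k}$ and that the separate ``$k=m$'' term is absorbed into the first sum.

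One point deserves flagging. Your derivation of the second sum gives the upper entry $(m+t)/2-1=(m+t-2)/2$, whereas the theorem as printed has $(m-t-2)/2$. Your version is the combinatorially correct one: with $t$ vertical dominoes and $(m-t)/2$ horizontal pairs there are $(m+t)/2$ objects, and after stripping the leftmost one you are arranging $(m+t)/2-1$ objects. A quick numerical check (e.g.\ $m=6$, $p=3$, $t=2$) confirms that the printed formula fails while yours recovers $G_6$; the paper's $(m-t-2)/2$ is evidently a typo for $(m+t-2)/2$, consistent with the $(m+t-4)/2$ appearing in the two-ended theorem just above. So your proof is sound, but it establishes the corrected identity rather than the one literally stated.
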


\begin{theorem}
For $p\geq 2, m\geq p-1$, we have
\begin{multline*}
    \sum_{\mycom{k=p}{k\equiv p \pmod 2}}^{m-1}\binom{(k+p-2)/2}{p-1}G_{m-k}+\binom{(m+p-2)/2}{p-1}G_1\\
     +\sum_{\mycom{t=0}{t\equiv m \pmod 2}}^{p-1}\left(G_1\binom{(m-t-2)/2}{t-1}+G_0\binom{(m-t-2)/2}{t} \right)=G_m.
\end{multline*}
The binomial coefficient $\binom{n}{r}$ is zero is $n$ is not an
integer.
\end{theorem}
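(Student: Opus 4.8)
The plan is to count, in two different ways, the colored domino tilings of the board of Figure~\ref{fig:gib-g} of order~$m$; by Section~\ref{sec:cGibonacci} this number is exactly $G_m$, the right-hand side of the identity. Draw the board with its marked end on the right. Recall from Section~\ref{sec:cGibonacci} that in any colored tiling the marked end is resolved in exactly one of two ways: by a distinguished \emph{vertical} domino in the last column, which may receive any one of $G_1$ colors and leaves columns $1,\dots,m-1$ as an ordinary monochromatic $2\times(m-1)$ board; or by a distinguished pair of \emph{horizontal} dominoes filling the last two columns, which together may receive any one of $G_0$ colors and leaves columns $1,\dots,m-2$ as an ordinary $2\times(m-2)$ board. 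Every other domino is monochromatic, and the number of tilings of an ordinary $2\times N$ board that use exactly $v$ vertical dominoes is $\binom{(N+v)/2}{v}$, which vanishes unless $N\equiv v\pmod 2$ --- this being exactly the convention on binomial coefficients quoted in the statement.

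For the second count I would record the position of the \emph{$p$-th vertical domino} of a tiling, vertical dominoes being counted from the left, with the distinguished vertical domino at the marked end included in the count whenever it is present. First, suppose this $p$-th vertical domino is an ordinary one, occupying column~$k$ with $p\le k\le m-1$. Then columns $1,\dots,k-1$ form an ordinary $2\times(k-1)$ board carrying exactly $p-1$ vertical dominoes --- possible only if $k\equiv p\pmod 2$, and then in $\binom{(k+p-2)/2}{p-1}$ ways --- column~$k$ holds this vertical domino, and columns $k+1,\dots,m$, with the coloring rule of the marked end still in force, form a board of Figure~\ref{fig:gib-g} of order $m-k$, tiled in $G_{m-k}$ ways; summing on $k$ gives the first sum. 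Second, suppose this $p$-th vertical domino is the distinguished vertical domino of the last column. Then columns $1,\dots,m-1$ carry exactly $p-1$ vertical dominoes, in $\binom{(m+p-2)/2}{p-1}$ ways (which is zero unless $m\equiv p\pmod2$), and the marked end contributes the factor $G_1$; this accounts for the middle term.

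Finally, there remain the tilings with fewer than $p$ vertical dominoes altogether, say exactly $t$ of them with $0\le t\le p-1$. I would split these according to how the marked end was resolved: if by the distinguished vertical domino (weight $G_1$), then one of the $t$ vertical dominoes is that one and the remaining $t-1$ lie in the ordinary $2\times(m-1)$ board to its left; if by the distinguished horizontal pair (weight $G_0$), then all $t$ vertical dominoes lie in the ordinary $2\times(m-2)$ board; in either case $t\equiv m\pmod 2$, and applying the enumeration $\binom{(N+v)/2}{v}$ recalled above and summing over $t$ assembles the last sum. The three cases are mutually exclusive and together exhaust every tiling of the board, so adding their contributions and equating the total with $G_m$ proves the identity. (The preceding theorem is the companion statement obtained by placing the marked end instead on the same side as the $p-1$ forced vertical dominoes, so that the marked-free piece becomes an ordinary board counted by $f_{m-k}$.) I expect that the only real work is the bookkeeping: reading off, from the precise picture of the marked end, how many columns each of its two resolutions consumes, so that the binomial arguments and the index ranges of $k$ and $t$ come out as stated, and checking the degenerate terms --- empty ranges of $k$, small values of $t$, half-integer binomial arguments --- so that the three cases form a genuine partition. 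There should be no analytic difficulty.
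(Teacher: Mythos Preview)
Your approach is exactly the paper's: take the Gibonacci board of Figure~\ref{fig:gib-g} and classify its tilings by the location of the $p$-th vertical domino, with the marked end placed on the side opposite the $p-1$ forced verticals so that the unrestricted piece carries the factor $G_{m-k}$; the paper gives no further detail than the remark that these four results follow ``in a similar way'' from the earlier argument, so you have actually written out more than the authors do. One caution on the bookkeeping you flag at the end: your count for the final sum produces $G_1\binom{(m+t-2)/2}{t-1}+G_0\binom{(m+t-2)/2}{t}$, not the printed $\binom{(m-t-2)/2}{\cdot}$; the printed version already fails at $m=3$, $p=2$ (it gives $2G_1$ rather than $G_3$), so this is a typo in the statement and your derived form is the correct one.
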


\begin{theorem}
For $p\geq 2, m\geq p-1$, we have
\begin{multline*}
    \sum_{k=2p}^m\left(G_1\binom{k-p-2}{p-1}+G_0\binom{k-p-2}{p-2}\right)f_{m-k}\\
    +\sum_{t=0}^{p-1}\left(G_1\binom{m-t-1}{t}+G_0\binom{m-t-1}{t-1} \right)=G_m.
\end{multline*}
\end{theorem}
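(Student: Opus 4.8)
The plan is to repeat, almost verbatim, the argument of the earlier theorem that counts the tilings of the board of Figure~\ref{fig4} by the $p$-th horizontal domino in the first row, but now using the Gibonacci board of Figure~\ref{fig:gib-g}. By the construction in Section~\ref{sec:two}, Case~2, the weighted domino tilings of the $2\times m$ board of Figure~\ref{fig:gib-g} --- where a domino covering $x$ and $y$ carries $G_0$ colours, a domino covering $x$ and $z$ carries $G_1$ colours, and every other domino is monochromatic --- number exactly $G_m$. This is the right-hand side. I would then recount these weighted tilings by conditioning on the position of the $p$-th horizontal domino appearing in the first row, using that on a $2\times n$ board horizontal dominoes always occur in vertically stacked pairs, so that this position is well defined.

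First, consider the tilings in which such a $p$-th first-row horizontal domino exists, and let it occupy columns $k-1$ and $k$. Since the first row already contains $p-1$ earlier horizontal dominoes, we must have $k\ge 2p$. Everything strictly to the right of column $k$ is an ordinary $2\times(m-k)$ board with no marked squares, since $x,y,z$ all lie at the left end; it therefore admits $f_{m-k}$ tilings. To the left we must shuffle the remaining $p-1$ stacked horizontal pairs with $k-2p$ vertical dominoes, one of which meets the marked squares, and --- exactly as in the single-Gibonacci identities proved above and in the doubled-board theorem --- the $G_0$/$G_1$-weighted number of such left configurations is $G_1\binom{k-p-2}{p-1}+G_0\binom{k-p-2}{p-2}$, the two terms recording the two ways the domino covering $x$ can be placed (pairing with $z$, weight $G_1$, or with $y$, weight $G_0$). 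Summing over $2p\le k\le m$ produces the first sum in the statement.

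The remaining tilings are those whose first row contains only $t$ horizontal dominoes, with $0\le t\le p-1$; each such tiling then has $m-2t$ vertical dominoes, and by the same reasoning the weighted number of these configurations is $G_1\binom{m-t-1}{t}+G_0\binom{m-t-1}{t-1}$. Summing over $0\le t\le p-1$ gives the second sum, and adding the two contributions recovers $G_m$, which is the identity. Compared with the doubled-board theorem, the right-hand side $G_1G_{m-1}+G_0G_{m-2}$ collapses to $G_m$, the right-hand factor $G_{m-k}$ becomes the plain $f_{m-k}$, and the quadratic coefficients $G_1^2,2G_0G_1,G_0^2$ degenerate to the linear $G_1,G_0$ --- all because Figure~\ref{fig:gib-g} carries marked squares at only one end; the binomial arguments also shift by one for the same reason.

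The substance of the proof, as before, is entirely bookkeeping: pinning down the lower limit $k=2p$ and checking that the boundary case $k=m$ is counted correctly; deriving the binomials $\binom{k-p-2}{p-1}$, $\binom{k-p-2}{p-2}$ and $\binom{m-t-1}{t}$, $\binom{m-t-1}{t-1}$ for the number of ways to interleave stacked pairs with vertical dominoes subject to one of them covering $x$; and verifying that the $k$-indexed and $t$-indexed families of tilings are disjoint and together exhaust all tilings. These are the same placement-and-parity checks already carried out for the board of Figure~\ref{fig4}, so no new idea is needed; the one point to get right is the interaction between the marked squares $x,y,z$ and the conditioned first-row horizontal domino.
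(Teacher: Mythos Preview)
Your proposal is correct and is exactly the paper's intended argument: the paper does not give a separate proof of this theorem but simply instructs the reader to replace the doubly-marked board of Figure~\ref{fig4} by the singly-marked Gibonacci board of Figure~\ref{fig:gib-g} and repeat the $p$-th-horizontal-domino count, which is precisely what you have done. Your explanation of how the three changes (right-hand side $G_m$, factor $f_{m-k}$ in place of $G_{m-k}$, linear rather than quadratic coefficients with the binomial shift from $m-t-2$ to $m-t-1$) arise from having marked squares at only one end is accurate and more detailed than anything the paper itself provides.
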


\begin{theorem}
For $p\geq 2, m\geq p-1$, we have
\[
    \sum_{k=2p}^m \binom{k-p-1}{p-1}G_{m-k}
    +\sum_{t=0}^{p-1}\left(G_1\binom{m-t-1}{t}+G_0\binom{m-t-1}{t-1} \right)=G_m.
\]
\end{theorem}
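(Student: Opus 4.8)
The plan is to mimic the proof of the immediately preceding theorem, but starting from the weighted board of Figure \ref{fig:gib-g} in place of the doubly-capped board of Figure \ref{fig4}, and to sort the tilings according to the position of the $p$-th horizontal domino in the top row, counted from the end of the board opposite to the marked cells $x,y,z$ (so that the part of the board lying beyond this domino still carries the Gibonacci cap). By construction the board of Figure \ref{fig:gib-g} has $G_m$ weighted domino tilings, which is the quantity $G_m$ on the right-hand side.

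First I would handle the tilings that do contain at least $p$ horizontal dominoes in the top row. Suppose the $p$-th of them (counted as above) covers top-row squares $k-1$ and $k$. Decomposing the plain $2\times k$ block formed by columns $1,\dots,k$ into vertical blocks (single columns) and horizontal blocks (pairs of columns), that block uses the $p$ horizontal blocks together with $k-2p$ vertical blocks, and the $p$-th horizontal block is forced to be the last one; so the first $k-2$ columns split into $k-p-1$ blocks of which $p-1$ are horizontal, giving $\binom{k-p-1}{p-1}$ possibilities. Everything strictly to the right of column $k$ is a smaller copy of the board of Figure \ref{fig:gib-g} on $m-k$ plain columns, contributing $G_{m-k}$. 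As $k$ runs over $2p,2p+1,\dots,m$ (the top value corresponding to the $p$-th horizontal domino abutting the cap) this case contributes $\sum_{k=2p}^{m}\binom{k-p-1}{p-1}G_{m-k}$, where the endpoint terms behave correctly under the convention that a binomial coefficient with non-integral or out-of-range entries is zero.

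It remains to count the tilings whose top row contains only $t\le p-1$ horizontal dominoes. Here I would peel off the cap in its two admissible ways: the vertical placement, of weight $G_1$, leaves a plain $2\times(m-1)$ strip and contributes no top-row horizontal domino, so among such tilings there are $G_1\binom{m-t-1}{t}$ with exactly $t$ of them; the horizontal placement, of weight $G_0$, already supplies one top-row horizontal domino and leaves a plain $2\times(m-2)$ strip, giving $G_0\binom{m-t-1}{t-1}$ more. Summing over $0\le t\le p-1$ yields the second sum, and adding the two cases gives the identity. The only delicate point — which I would verify last — is the bookkeeping at the cap: that the $G_0$ placement is genuinely counted as one of the top-row horizontal dominoes (so that the relevant binomial is $\binom{m-t-1}{t-1}$, which vanishes at $t=0$ as it should), and that the $k=m$ term of the first sum and the small-$m$ instances of the second sum are consistent with the degenerate-binomial conventions. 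Since everything else is parallel to the preceding proof, no new difficulty arises.
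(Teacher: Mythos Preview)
Your proposal is correct and follows exactly the approach the paper intends: the paper does not write out this proof but directs the reader to ``take the board in Figure~\ref{fig:gib-g}'' and argue ``in a similar way'' to the proved double-capped horizontal-domino theorem, which is precisely what you do by counting the $p$-th top-row horizontal domino from the uncapped end so that the remaining capped segment contributes $G_{m-k}$. Your handling of the boundary cases (the $G_0$ cap counting as a top-row horizontal domino, and the $k=m,\,m-1$ terms giving $G_0,\,G_1$) is the only point requiring care, and you identify it correctly.
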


\subsection{Identities involving two Gibonacci sequences}

We now define a `modified' Gibonacci sequence for our next few
results. Let $\{G^\prime_n\}_{n\geq 0}$ be the sequence defined by
$G^\prime_0=G_1, G^\prime_1=G_0$ and \[
G^\prime_n=G^\prime_{n-1}+G^\prime_{n-2}.
\]

\begin{identity}\label{id:gib-2}
For all $n\geq 1$ we have,
\[
G_0(G_n-G^{\prime}_{n-1})=G_1(G^{\prime}_n-G_{n-1}).
\]
\end{identity}

\begin{proof}
We use the $2\times (n+1)$ board as shown in Figure \ref{gigg1}. Notice that the right hand side of this board is slightly different than the ones we have used so far in Figure \ref{fig4}. We count the number of domino tilings of this board in two ways.

If we start from the right side of the board and count the number of tilings as we did in the proof of Identity \ref{th1}, then we see that the total number of tilings is $G_0G_n+G_1G_{n-1}$.

\begin{figure}[htb!]
\includegraphics[width=0.9\textwidth]{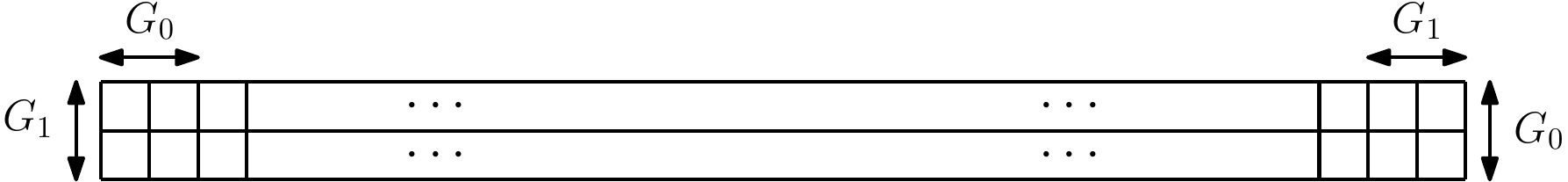}
\caption{$2\times (n+1)$ board associated with Identity \ref{id:gib-2}.} \label{gigg1}
\end{figure}

Alternatively, we start from the left side of the board and count the number of tilings as we did in the proof of Identity \ref{th1}, then we see that the total number of tilings $G_1G^{\prime}_n+G_0G^{\prime}_{n-1}$.

Equating the two numbers and a simple algebraic operation proves the identity.
\end{proof}

\begin{identity}\label{id:th37}
For all $n\geq 2$ and $N\leq n$ we have,
\begin{align*}
     G_NG^{\prime}_{n-N}+G_{N-1}G^{\prime}_{n-N-1}=~& G_0G_{n-1}+G_1G_{n-2}\\
     =~& G_1G^{\prime}_{n-1}+G_0G^{\prime}_{n-2}.
\end{align*}
\end{identity}

\begin{proof}
The proof is similar to the proofs of Identities \ref{th1} and \ref{id:gib-2}. We do not prove it here, but direct the reader to
Figure \ref{gigg2} which proves this.

\begin{figure}[htb!]
\includegraphics[width=0.9\textwidth]{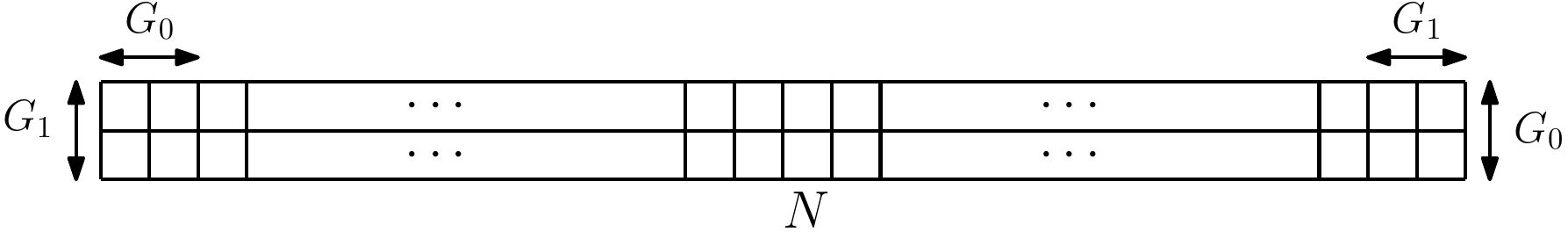}
\caption{$2\times n$ board associated with Identity \ref{id:th37}.} \label{gigg2}
\end{figure}

\end{proof}

\begin{corollary}
For all $n\in \mathbb{N}$, we have
\begin{align*}
     G_nG^{\prime}_n+G_{n-1}G^{\prime}_{n-1}=~& G_0G_{2n-1}+G_1G_{2n-2}\\
     =~& G_1G^{\prime}_{2n-1}+G_0G^{\prime}_{2n-2}.
\end{align*}
\end{corollary}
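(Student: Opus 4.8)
The plan is to obtain this corollary as an immediate specialization of Identity~\ref{id:th37}. That identity holds for all $n\geq 2$ and all $N\leq n$, so I would apply it with the running index $n$ replaced by $2n$ and with the choice $N=n$ (which is admissible since $n\leq 2n$ and $2n\geq 2$ whenever $n\geq 1$). Under this substitution the subscripts $n-N$ and $n-N-1$ on the left-hand side become $n$ and $n-1$, so the left side of Identity~\ref{id:th37} reads $G_nG^{\prime}_n+G_{n-1}G^{\prime}_{n-1}$, exactly the quantity appearing in the corollary. On the right, $G_0G_{n-1}+G_1G_{n-2}$ turns into $G_0G_{2n-1}+G_1G_{2n-2}$ and $G_1G^{\prime}_{n-1}+G_0G^{\prime}_{n-2}$ turns into $G_1G^{\prime}_{2n-1}+G_0G^{\prime}_{2n-2}$, which are precisely the two expressions claimed to be equal to it. The remaining small cases (e.g.\ $n=0$, $n=1$, if one reads $\mathbb{N}$ as including them) are checked directly from the defining recurrences of $\{G_n\}$ and $\{G^{\prime}_n\}$, extended to small indices in the usual way via $G_{k-2}=G_k-G_{k-1}$.

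Alternatively, one can give a fully self-contained combinatorial argument mirroring the proof of Identity~\ref{id:th37}: take the length-$2n$ analog of the board in Figure~\ref{gigg2}, break it at the central column $N=n$ to read off $G_nG^{\prime}_n+G_{n-1}G^{\prime}_{n-1}$, and then count the same tilings by working inward from the right end (giving $G_0G_{2n-1}+G_1G_{2n-2}$) and from the left end (giving $G_1G^{\prime}_{2n-1}+G_0G^{\prime}_{2n-2}$). Because the board now has even length $2n$, the middle break sits at a genuine column boundary, so there is no parity obstruction to worry about.

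Given Identity~\ref{id:th37}, there is essentially no obstacle: the corollary is literally its $N=n$, length-$2n$ instance, so the only thing to be careful about is transcribing the shifted subscripts without error. If one instead insists on the combinatorial route, the only delicate point is copying the colour conditions at the two ends of the board in Figure~\ref{gigg2} correctly when both the left block and the right block are ``$G$-type'' or ``$G^{\prime}$-type'', but this bookkeeping has already been done in the proof of Identity~\ref{id:th37}, so I would simply cite it.
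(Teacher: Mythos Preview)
Your proposal is correct and matches the paper's proof exactly: the paper also derives the corollary by specializing Identity~\ref{id:th37} with $n=2N$ (equivalently, your ``replace $n$ by $2n$ and set $N=n$'') and then relabelling the index. The additional remarks you make about small cases and the alternative combinatorial argument are fine but go beyond what the paper records.
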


\begin{proof}
Taking $n=2N$ in Identity \ref{id:th37} and then changing the dummy suffixes proves this.
\end{proof}

\begin{identity}\label{id:gib-fib}
For all $n\geq 3$ we have,
\begin{equation}\label{eq:gg1}
G_n-G^{\prime}_n=(G_1-G_0)f_{n-3}.
\end{equation}
\end{identity}

\begin{proof}
We use the board shown in Figure \ref{gigg3}: we have a $2\times n$ board placed horizontally, and then we extend the first two columns of this board to $n$ rows vertically below. We now count the number of domino tilings of this board in two different ways.

First let us look at the horizontal $2\times n$ board. If no domino cover the second and third squares of the first column, then the total number of tilings is $G_nf_{n-2}$. Otherwise the total number of such tilings is $G_0f_{n-2}f_{n-3}$. So in total we get
$G_nf_{n-2}+G_0f_{n-2}f_{n-3}$ many tilings of the board with dominoes.

\begin{figure}[htb!]
\includegraphics[width=0.5\textwidth]{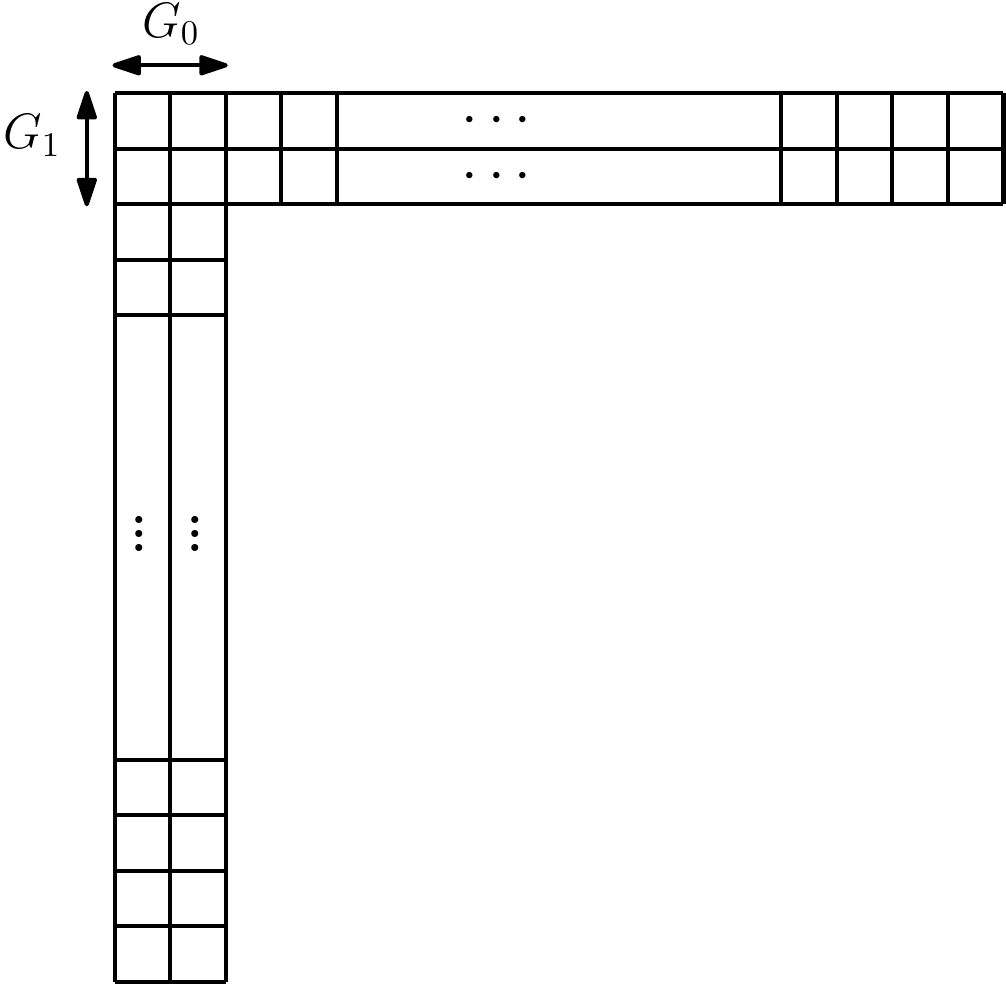}
\caption{Board associated with Identity \ref{id:gib-fib}.} \label{gigg3}
\end{figure}

Now, let us count the number of tilings by looking at the board in the vertical direction. If no domino cover the second and third squares of the first row, then the number of tilings is $G^{\prime}_nf_{n-2}$. Otherwise, the number is $G_1f_{n-2}f_{n-3}$. So, in total the number of domino tilings of the board is $G^{\prime}_nf_{n-2}+G_1f_{n-2}f_{n-3}$.

Equating the two numbers we get $G_n+G_0f_{n-3}=G^{\prime}_n+G_1f_{n-3}$ which proves the identity.
\end{proof}

Two very simple congruence relations follow as corollaries.

\begin{corollary}
For all $n\geq 3$, we have
\[
G_n\equiv G^{\prime}_n \pmod {f_{n-3}}.
\]
\end{corollary}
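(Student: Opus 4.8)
The plan is to obtain the corollary as an immediate consequence of Identity~\ref{id:gib-fib}, so essentially no new work is required. Equation~\eqref{eq:gg1} asserts the exact equality $G_n-G^{\prime}_n=(G_1-G_0)f_{n-3}$ for all $n\geq 3$. The only thing one needs to add is the observation that the initial values $G_0$ and $G_1$ are integers (as is implicit throughout, since the Gibonacci numbers generalize the Fibonacci and Lucas numbers and are assigned as domino ``colours'' in the combinatorial interpretation of Section~\ref{sec:cGibonacci}); hence every $G_n$ and every $G^{\prime}_n$ is an integer, and in particular $G_1-G_0\in\mathbb{Z}$.

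The steps, in order, would be: first invoke \eqref{eq:gg1} to write $G_n-G^{\prime}_n=(G_1-G_0)f_{n-3}$; second, note that $G_1-G_0$ is an integer, so the right-hand side is an integer multiple of $f_{n-3}$; third, conclude that $f_{n-3}\mid (G_n-G^{\prime}_n)$; and fourth, restate this divisibility as the congruence $G_n\equiv G^{\prime}_n\pmod{f_{n-3}}$. (For $n=3$ the modulus is $f_0=1$ and the statement is vacuous, while for $n\geq 4$ it is genuine; this edge case needs no separate comment.)

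There is no real obstacle here: the corollary is a one-line reduction of the already-established Identity~\ref{id:gib-fib}, and the entire content lies in that earlier tiling argument where the common overcount term $f_{n-2}f_{n-3}$ cancels when the two counts are equated. If one insisted on a self-contained derivation, one could re-run that same double-counting of the board in Figure~\ref{gigg3} and simply stop at the divisibility conclusion rather than the exact identity, but given \eqref{eq:gg1} this is unnecessary.
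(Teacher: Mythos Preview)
Your proposal is correct and matches the paper's approach exactly: the paper simply states that this congruence ``follows as a corollary'' of Identity~\ref{id:gib-fib} without giving any further argument, and your reduction via \eqref{eq:gg1} is precisely the intended one-line deduction.
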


\begin{corollary}
For all $n\in \mathbb{N}$, we have
\[
G_n\equiv G^{\prime}_n \pmod {G_1-G_0}.
\]
\end{corollary}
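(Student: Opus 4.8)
The plan is to read the congruence straight off Identity~\ref{id:gib-fib}. That identity establishes, by a tiling argument, that $G_n - G^{\prime}_n = (G_1 - G_0) f_{n-3}$ for all $n \geq 3$. Since $f_{n-3}$ is a (nonnegative) integer, the right-hand side is an integer multiple of $G_1 - G_0$, and therefore $G_n \equiv G^{\prime}_n \pmod{G_1 - G_0}$ for every $n \geq 3$.

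It then remains only to dispose of the three values $n = 0, 1, 2$, which lie outside the range in which Identity~\ref{id:gib-fib} was proved. These follow at once from the definition $G^{\prime}_0 = G_1$, $G^{\prime}_1 = G_0$ together with the common recurrence: one computes $G_0 - G^{\prime}_0 = G_0 - G_1 = -(G_1 - G_0)$, $\ G_1 - G^{\prime}_1 = G_1 - G_0$, and $G_2 - G^{\prime}_2 = (G_0 + G_1) - (G^{\prime}_0 + G^{\prime}_1) = (G_0 + G_1) - (G_1 + G_0) = 0$. In each case the difference is divisible by $G_1 - G_0$, so the congruence holds at $n = 0, 1, 2$ as well, and hence for all $n \in \mathbb{N}$.

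There is no genuine obstacle here; the only point requiring care is that \eqref{eq:gg1} was established only for $n \geq 3$, so the base cases must be verified by hand, which is a one-line computation as above. Alternatively, one may observe that \eqref{eq:gg1} persists for $n = 0, 1, 2$ if the tiling numbers are extended backwards via the Fibonacci recurrence to $f_0 = 1$, $f_{-1} = 0$, $f_{-2} = 1$, $f_{-3} = -1$; this folds the small cases into the single formula and makes the corollary an immediate consequence of Identity~\ref{id:gib-fib} for every $n \in \mathbb{N}$ with no case distinction at all.
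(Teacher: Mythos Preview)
Your proof is correct and follows exactly the intended route: the paper states this corollary immediately after Identity~\ref{id:gib-fib} as one of ``two very simple congruence relations'' with no further argument, so reading it off \eqref{eq:gg1} is precisely what is meant. Your additional verification of the cases $n=0,1,2$ (which lie outside the stated range $n\ge 3$ of \eqref{eq:gg1}) is more careful than the paper itself.
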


\begin{identity}\label{id:gib-fib-2}
For all $n\geq 2$, we have
\begin{equation}\label{eq:gg2}
G_1(G_n-G^{\prime}_n)+G_0(G_{n-1}-G^{\prime}_{n-1})=(G_1-G_0)G_{n-2}.
\end{equation}
\end{identity}

\begin{proof}
We use the board shown in Figure \ref{gigg4}: we have a $2\times n$ board placed horizontally, and then we extend the first two columns of this board to $n$ rows vertically below. We now count the number of domino tilings of this board in two different ways.

First let us look at the horizontal $2\times n$ board. If a domino covers the first and second squares in the first column then the number of domino tilings of the whole board is $G_1G_{n-1}G_{n-2}$. Otherwise, the number of such tilings is $G_0G_{n-2}G_{n-1}$. So the total number of tilings of the board is $G_1G_{n-1}G_{n-2}+G_0G_{n-2}G_{n-1}$.

\begin{figure}[htb!]
\includegraphics[width=0.5\textwidth]{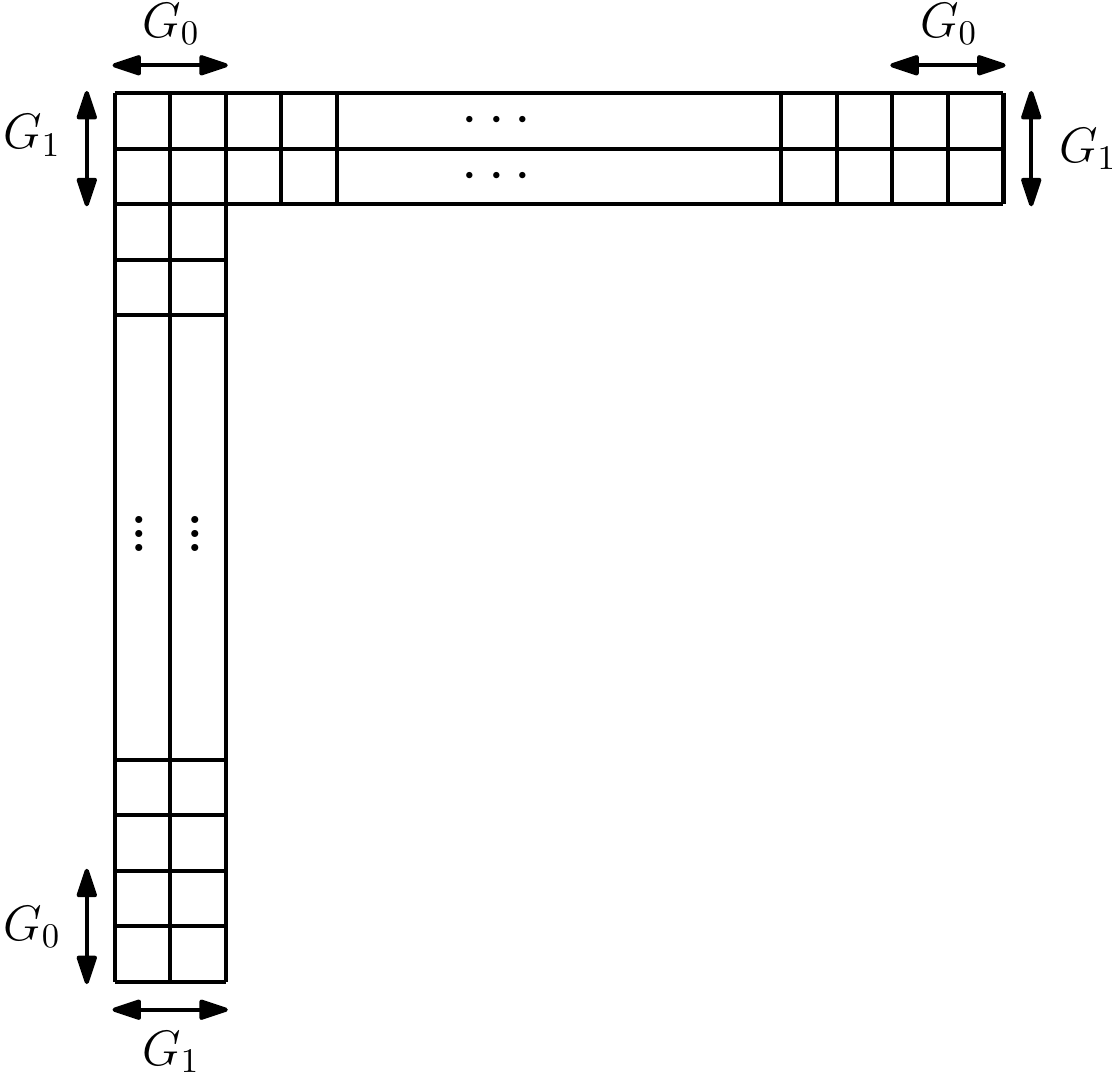}
\caption{Board associated with Identity \ref{id:gib-fib-2}.} \label{gigg4}
\end{figure}

Alternatively, let us now look at the figure in the vertical direction (the reader can think of this as rotated by $90^\circ$). If a domino covers the second and third squares in the first row then the number of tilings in this case is $G_1G_{n-2}G_{n-3}$. Otherwise, we can break the board in between the second and third columns. One part of this broken board will contribute $G_{n-2}$ many tilings. For the other part we look at the bottom most squares of the first and second rows: if a domino covers the bottom most squares in the first and second row then the total number of tilings of the broken part is $G_1G^\prime_{n-1}$, otherwise the number is $G_0G^\prime_{n-2}$. So, in total we get the number of domino tilings of the whole board is $G_1G_{n-2}G_{n-3}+G_{n-2}(G_1G^\prime_{n-1}+G_0G^\prime_{n-2})$.

Equating the two numbers from above and after some algebraic manipulation we get
\[
G_1G_{n-2}(G_{n-1}-G^\prime_{n-1})+G_0G_{n-2}(G_{n-2}-G^\prime_{n-2})=(G_1-G_0)G_{n-2}G_{n-3}.
\]
This proves the identity after substituting $n\rightarrow n+1$.
\end{proof}

\begin{remark}
The identity \eqref{eq:gg2} can be proved algebraically using the
identity \eqref{eq:gg1}.
\end{remark}

\begin{identity}
For all $m,n\geq 3$ we have,
\begin{equation}\label{eq:11}
    G_nf_{m-2}+G_0f_{n-2}f_{m-3}=G^{\prime}_mf_{n-2}+G_1f_{n-3}f_{m-2},
\end{equation}
and
\begin{equation}\label{eq:111}
    (G_1G_{n-1}+G_0G_{n-2})G_{m-2}+G_0G_{n-2}G_{m-3}=(G_1G^{\prime}_{m-1}+G_0G^{\prime}_{m-2})G_{n-2}+G_1G_{n-3}G_{m-2}.
\end{equation}
\end{identity}

\begin{proof}
If we take an $2\times m$ board instead of the $2\times
n$ board in the vertical position in Figure \ref{gigg3}, then we get the identity \eqref{eq:11}. Similarly, from Figure \ref{gigg4}, we get the identity \eqref{eq:111}.
\end{proof}

\begin{identity}\label{id:gib-fib-3}
For all $n\geq 2$, we have
\[
(G_0G_n+G_1G_{n-1}-G_1G^{\prime}_{n-2})G_{n-1}=(G_1G^{\prime}_n+G_0G^{\prime}_{n-1}-G_0G_{n-2})G^{\prime}_{n-1}.
\]
\end{identity}

\begin{proof}
The proof of this identity is similar to the proof of Identity \ref{id:gib-fib-2}, so we omit the details here. The board that we need to take in this case is shown in Figure \ref{gigg5}.

\begin{figure}[htb!]
\includegraphics[width=0.5\textwidth]{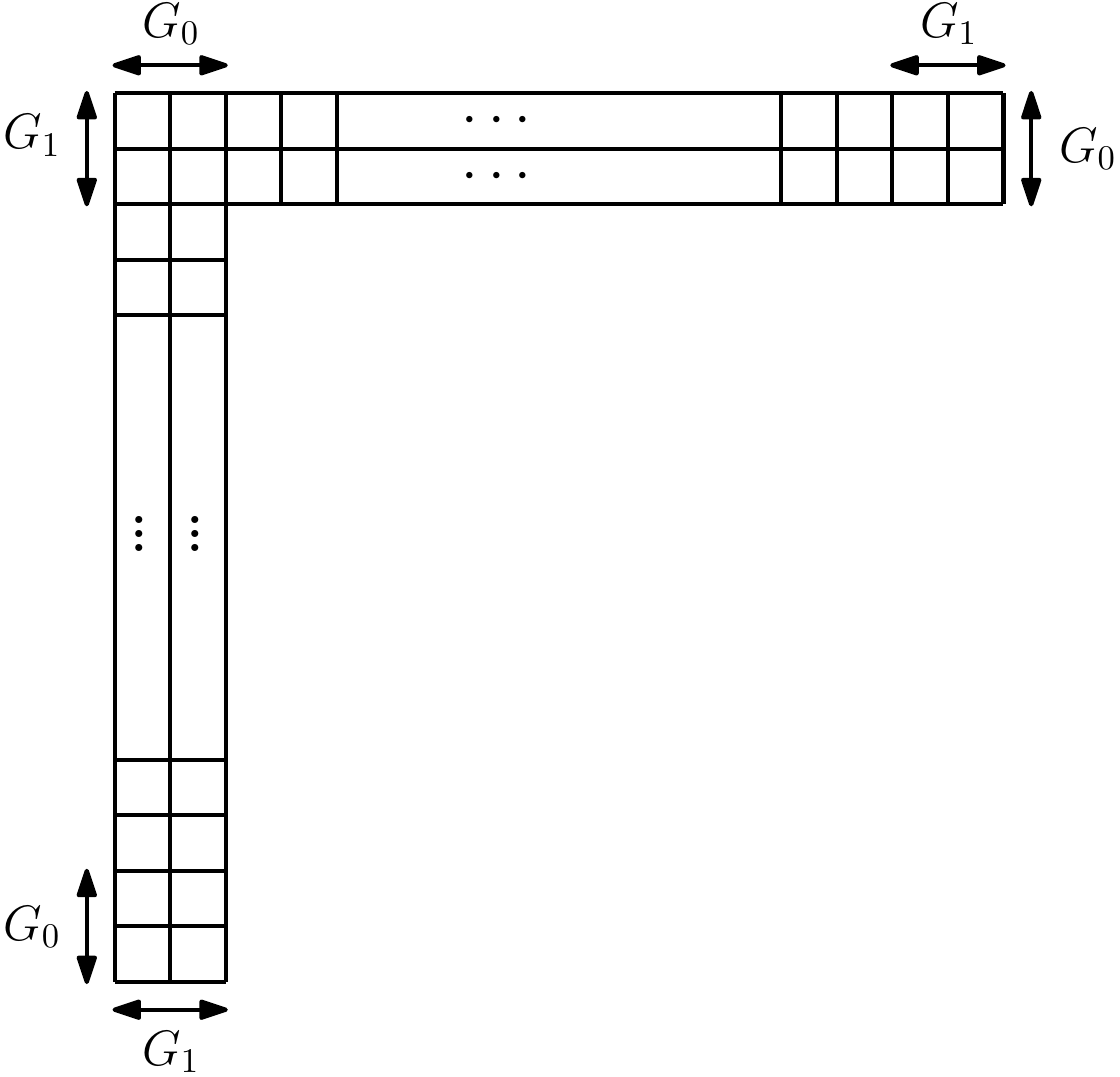}
\caption{Board associated with Identity \ref{id:gib-fib-3}.} \label{gigg5}
\end{figure}

\end{proof}

\section{Proofs of some known and new results from our interpretation of Gibonacci numbers}\label{sec:four}

Using the ideas discussed for Fibonacci numbers by Benjamin and Quinn \cite[Chapter
1]{bq}, we can easily prove different identities involving
Gibonacci numbers with the help of our
 combinatorial
interpretation of Gibonacci numbers. We prove several such known identities in this section, which in most of the cases yields simpler proofs than the ones presented by Benjamin and Quinn \cite{bq}. We also prove one new result in this section (Theorem \ref{thm:new-1}).

Since, we have already proved several of the results in details, for the sake of brevity in this section we do not write all the proofs in details.

\begin{identity}[Identity 39, \cite{bq}]
For all $n\in \mathbb{N}$, we have
\[
G_0+G_1+G_2+\dots+G_n=G_{n+2}-G_1.
\]
\end{identity}

This can be seen by counting the domino tilings of a $2\times(n+2)$ board with respect to the location of the last pair of horizontal dominoes. The proof is similar to the one given by Benjamin and Quinn \cite{bq}.

\begin{identity}[Identity 62, \cite{bq}]
For all $n\in \mathbb{N}$, we have
\[
G_0+G_2+G_4+\dots+G_{2n}=G_{2n+1}-G_{-1}.
\]
\end{identity}

This can be seen by counting the domino tilings of a $2\times(2n+1)$ board with respect to the location of the last vertical domino. This was left as an exercise by Benjamin and Quinn \cite{bq}.

\begin{identity}[Generalization of Identity 7, \cite{bq}]
For all $n\in \mathbb{N}$, we have
\[
3G_n=G_{n+2}+G_{n-2}.
\]
\end{identity}

The proof is analogous to the proof of Benjamin and Quinn's Identity 7 \cite{bq}, so we omit it here.

\begin{identity}[Identity 67, \cite{bq}]
For all $n\in \mathbb{N}$, we have
\[
\sum_{i=0}^nG_i^2=G_nG_{n+1}+G_0(G_0-G_1).
\]
\end{identity}

This is a generalization of Benjamin and Quinn's Identity 9 \cite{bq} and the proof is analogous to it, so we omit it here. This was left as an exercise by Benjamin and Quinn \cite{bq}. They use the technique of \textit{faults and tails} (\cite[page 7]{bq}) to prove Identity 9. We do not recall the technique here, but we use it in the following.

In the following we assume that $\{G_n\}_{n\geq 0}$ and $\{H_n\}_{n\geq 0}$ are two Gibonacci sequences (possibly with different initial conditions). Counting the domino tilings of a $2\times (n+1)$ board for $\{H_n\}_{n\geq 0}$ and a $2\times n$ board for $\{G_n\}_{n\geq 0}$ boards with respect to the location of the last \emph{fault}, we can find the following more general identity.

\begin{identity}[Identity 42, \cite{bq}]\label{id:45}
For all $n\in \mathbb{N}$, we have
\[
\sum_{i=0}^nH_iG_i=
\begin{cases}
    H_{n+1}G_n+(H_0-H_1)G_0, & \hbox{if $n$ is even;} \\
    H_{n+1}G_n+H_0(G_0-G_1), & \hbox{if $n$ is odd.} \\
\end{cases}
\]
\end{identity}
Taking $H_0=G_{-1}$ and $H_1=G_0$, we get the following identity.
\begin{corollary}[Identity 41, \cite{bq}]
For all $n\in \mathbb{N}$, we have
\[
\sum_{i=1}^nG_{i-1}G_i=
\begin{cases}
    G_n^2-G_0^2, & \hbox{if $n$ is even;} \\
    G_n^2-G_1(G_1-G_0), & \hbox{if $n$ is odd.} \\
\end{cases}
\]
\end{corollary}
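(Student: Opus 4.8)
The plan is to derive this corollary directly from Identity \ref{id:45} by a suitable choice of the auxiliary Gibonacci sequence $\{H_n\}$, with no new combinatorial construction needed.

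First I would extend $\{G_n\}$ one step backwards: the recursion $G_n=G_{n-1}+G_{n-2}$ forces $G_{-1}=G_1-G_0$, so this value is well defined. Then set $H_0=G_{-1}=G_1-G_0$ and $H_1=G_0$, and let $\{H_n\}$ be the Gibonacci sequence with these initial values. Since both sequences satisfy the same recursion and $H$ is just $G$ with its index shifted by one, an immediate induction gives $H_i=G_{i-1}$ for every $i\ge 0$; in particular $H_{n+1}=G_n$.

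Next I would substitute these into Identity \ref{id:45}. The left side becomes $\sum_{i=0}^n H_iG_i=\sum_{i=0}^n G_{i-1}G_i=G_{-1}G_0+\sum_{i=1}^n G_{i-1}G_i$, so $\sum_{i=1}^n G_{i-1}G_i=\left(\sum_{i=0}^n H_iG_i\right)-G_{-1}G_0$. On the right side I plug in $H_{n+1}=G_n$, $H_0=G_1-G_0$, $H_1=G_0$ into both branches of Identity \ref{id:45}: for $n$ even this gives $G_n^2+(H_0-H_1)G_0=G_n^2+(G_1-2G_0)G_0$, and for $n$ odd it gives $G_n^2+H_0(G_0-G_1)=G_n^2-(G_1-G_0)^2$.

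Finally I would subtract $G_{-1}G_0=(G_1-G_0)G_0$ in each case and simplify: in the even case $G_n^2+(G_1-2G_0)G_0-(G_1-G_0)G_0=G_n^2-G_0^2$, and in the odd case $G_n^2-(G_1-G_0)^2-(G_1-G_0)G_0=G_n^2-(G_1-G_0)G_1=G_n^2-G_1(G_1-G_0)$, which is exactly the asserted formula. The only place that needs genuine attention is the bookkeeping of the $i=0$ term together with the value $G_{-1}=G_1-G_0$; apart from that the argument is a routine specialization of Identity \ref{id:45}, so I do not anticipate a real obstacle.
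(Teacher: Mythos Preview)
Your proposal is correct and follows exactly the approach sketched in the paper: the paper derives the corollary from Identity~\ref{id:45} by the substitution $H_0=G_{-1}$, $H_1=G_0$, and you have simply filled in the routine bookkeeping (handling the $i=0$ term and simplifying the two cases) that the paper leaves implicit.
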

\noindent This can also be proved easily using a combinatorial method.

The following was left as an exercise by Benjamin and Quinn \cite{bq}. This identity in itself is a generalization of Cassini's identity (Identity 9 of Benjamin and Quinn \cite{bq})
\[
f_n^2=f_{n+1}f_{n-1}+(-1)^n.
\]
They use a technique called \textit{tail swapping}, which we use here to give a brief proof of the result. We do not discuss in detail what is tail swapping and instead refer the reader to Benjamin and Quinn \cite[page 8]{bq}.

\begin{theorem}[Identity 46, \cite{bq}]\label{thm:old-1}
For all $n\in \mathbb{N}$, we have
\[
G_n^2=G_{n+1}G_{n-1}+(-1)^n(G_0G_2-G_1^2).
\]
\end{theorem}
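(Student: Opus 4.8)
The plan is to run Benjamin and Quinn's tail-swapping argument on two superimposed copies of our marked Gibonacci board. Let $A$ and $B$ each be a colored domino tiling of the marked $2\times n$ board of Figure~\ref{fig:gib-g}, so that pairs $(A,B)$ are counted by $G_n^2$. I would draw $A$ on columns $1,\dots,n$ and $B$ shifted one column to the right, on columns $2,\dots,n+1$, so that the marked triples $x,y,z$ of the two boards occupy columns $1$--$2$ and $2$--$3$ respectively. Call a column boundary a \emph{common fault} if it is a legal breaking point of both $A$ and $B$ at once.

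If a pair $(A,B)$ has a common fault, let column $k$ be the rightmost one and swap the two pieces lying strictly to the right of column $k$. Choosing $k$ to the right of all the marked squares, the left piece of $A$ (which carries $A$'s colored dominoes at $x,y,z$) combines with the right piece of $B$ to give a colored tiling $A'$ of the marked $2\times(n+1)$ board, while the left piece of $B$ combines with the right piece of $A$ to give a colored tiling $B'$ of the marked $2\times(n-1)$ board, with all colours carried along unchanged. Exactly as in the Fibonacci case, this is a bijection between pairs $(A,B)$ with a common fault and pairs $(A',B')$ with a common fault, so $G_n^2$ and $G_{n+1}G_{n-1}$ agree up to the fault-free pairs on either side.

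The only delicate step is enumerating the fault-free pairs. Once one leaves the marked region, two one-shifted tilings sharing no fault are forced to be a single run of $2\times 2$ horizontal blocks, so on each side there is essentially one such skeleton, and it survives for only one parity of $n$; the residual freedom is precisely the choice of colours on the dominoes meeting the marked squares, and conditioning on whether $x$ is covered together with $y$ (weight $G_0$) or with $z$ (weight $G_1$) shows these contributions total $G_0G_2-G_1^2$ on the $(A,B)$ side when $n$ is even and on the $(A',B')$ side when $n$ is odd. Combining the three steps gives $G_n^2-G_{n+1}G_{n-1}=(-1)^n(G_0G_2-G_1^2)$. I expect this fault-free bookkeeping — keeping the signs and the marked-cell colour weights consistent — to be the main obstacle; everything else is the ordinary Fibonacci tail swap.

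If one prefers to sidestep the boundary analysis, the same conclusion follows by first peeling the colored initial block off each board, i.e.\ writing each $G_n$ as $G_0f_{n-2}+G_1f_{n-1}$ via \eqref{fgeq}; this turns $G_n^2$ and $G_{n+1}G_{n-1}$ into $G_0,G_1$-weighted sums of four products of ordinary Fibonacci tiling numbers, after which one applies the tail-swapping identities $f_k^2=f_{k+1}f_{k-1}+(-1)^k$ and $f_{k-1}f_{k-2}-f_kf_{k-3}=(-1)^k$ term by term. The four error terms then collapse to $(-1)^n(G_0^2+G_0G_1-G_1^2)=(-1)^n(G_0G_2-G_1^2)$, which is the claimed identity.
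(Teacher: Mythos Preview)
Your first argument is the same tail-swapping proof the paper gives: two marked $2\times n$ boards versus a marked $2\times(n+1)$ board paired with a marked $2\times(n-1)$ board, matched up at the last common fault. The only difference is bookkeeping at the left end. Where you say the fault-free residue ``totals $G_0G_2-G_1^2$'' on one side, the paper is more explicit: it separates the case ``last fault at column $1$'' from the genuinely fault-free case and computes the exact (nonnegative) counts on \emph{both} sides for each parity, obtaining e.g.\ $G_n^2-G_1^2=G_{n+1}G_{n-1}-G_0G_1-G_0^2$ when $n$ is odd. Your sketch is correct in spirit, but since $G_0G_2-G_1^2$ can be negative it cannot literally be the count of fault-free pairs on one side; you would need exactly this two-sided accounting to finish, and you rightly flag it as the delicate step.

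Your second route---expanding $G_n=G_0f_{n-2}+G_1f_{n-1}$ and applying $f_k^2-f_{k+1}f_{k-1}=(-1)^k$ and $f_{k-1}f_{k-2}-f_kf_{k-3}=(-1)^k$ term by term---is a genuinely different, correct proof the paper does not give. It trades the marked-cell boundary analysis for two standard Fibonacci identities and a short algebraic collapse to $(-1)^n(G_0^2+G_0G_1-G_1^2)$; this is cleaner if one is willing to invoke those identities, while the paper's direct tail swap stays self-contained and illustrates that the Gibonacci board supports the same bijective machinery as the Fibonacci board.
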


\begin{proof}
We use two sets of boards as shown in Figure \ref{nfig1} and find a one-to-one correspondence between these two sets. Our sets are
\begin{enumerate}
    \item Two $2\times n$ boards, and
    \item One $2\times (n+1)$ board and one $2\times (n-1)$.
\end{enumerate}

Applying the technique of \emph{tail swapping} proves that the tilings of Set $(1)$
is in one-to-one correspondence with the tilings of Set $(2)$ of Figure \ref{nfig1}, when the last \emph{fault} is at the column
$i$, where $2\leq i\leq n$.

\begin{figure}[htb!]
\includegraphics[width=0.9\textwidth]{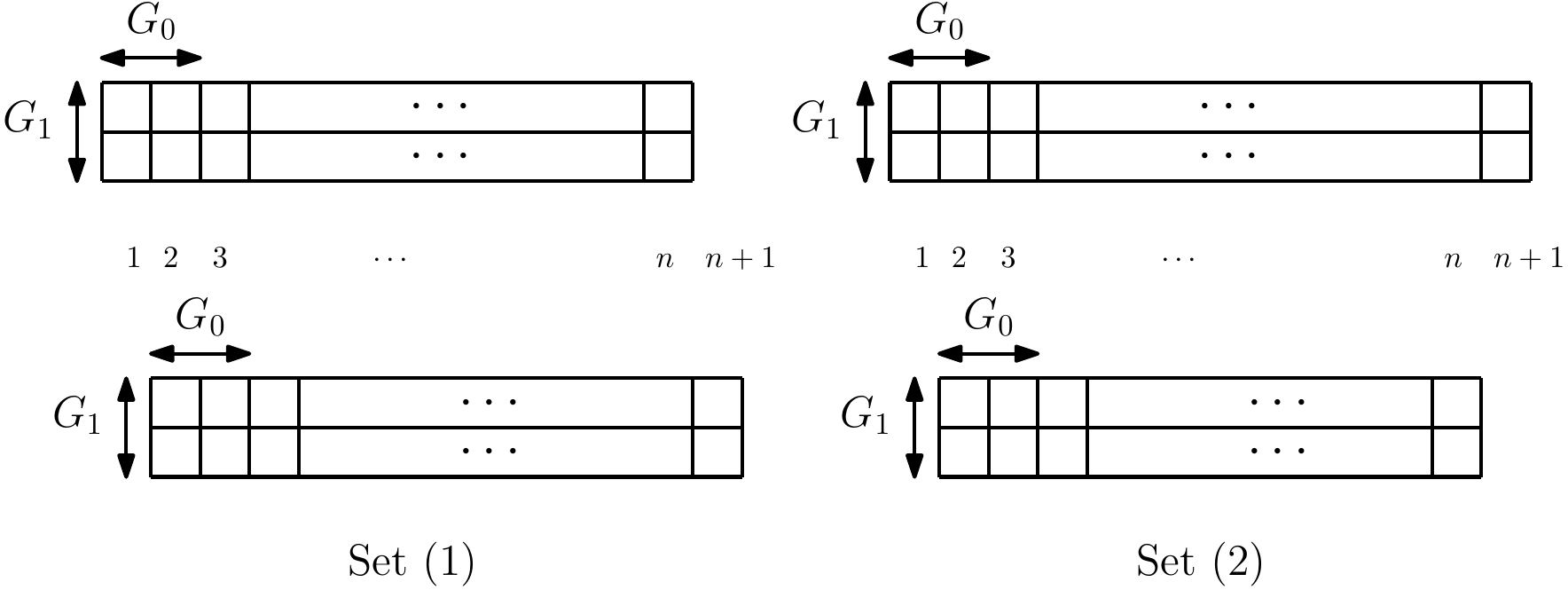}
\caption{Two sets of boards used in the proof of Theorem \ref{thm:old-1}.} \label{nfig1}
\end{figure}

Now let $n$ be odd and the last fault be at column $1$. Then in Set $(1)$ the columns $2$ and $3$ of the top board must occupy
horizontal dominoes; and column $2$ of the bottom board must occupy a vertical domino. So in Set $(1)$ we get in total $G_1^2$ domino tilings. And in Set $(2)$ we get $G_0G_1$ domino tilings. Also, there is no fault-free tiling in Set $(1)$ and there are
$G_0^2$ fault-free tilings in Set $(2)$. Hence $G_n^2-G_1^2=G_{n+1}G_{n-1}-G_0G_1-G_0^2$, which implies $ G_n^2=G_{n+1}G_{n-1}-(G_0G_2-G_1^2)$.

Again, let $n$ be even and the last fault is at column $1$. Then in
Set $(1)$ the column $2$ of the top board must occupy a vertical domino; and columns $2$ and $3$ of the bottom board must
occupy horizontal dominoes. So in Set $(1)$ we get $G_0G_1$ domino tilings. And therefore in
Set $(2)$ we get $G_1^2$ domino tilings. In this case, there are $G_0^2$ fault-free tilings in Set $(1)$ and
there is no fault-free tilings in Set $(2)$. Hence $G_n^2-G_0G_1-G_0^2=G_{n+1}G_{n-1}-G_1^2$, which implies $ G_n^2=G_{n+1}G_{n-1}+(G_0G_2-G_1^2)$.

This proves the result.
\end{proof}

In a similar way we have found the following more general identity.
\begin{theorem}\label{thm:new-1}
For all $n\in \mathbb{N}$ and $p\leq n$, we have
\[
G_n^2=G_{n+p}G_{n-p}+(-1)^{n+p-1}f_{p-1}(G_0G_{p+1}-G_1G_p).
\]
\end{theorem}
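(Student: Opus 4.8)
The plan is to imitate the proof of Theorem~\ref{thm:old-1}, replacing its ``shift by one'' with a ``shift by $p$''. I would compare two families of Gibonacci-coloured domino configurations: first, ordered pairs of two $2\times n$ boards, which are enumerated by $G_n\cdot G_n=G_n^2$; and second, ordered pairs consisting of one $2\times(n+p)$ board and one $2\times(n-p)$ board, enumerated by $G_{n+p}G_{n-p}$. I would draw the first family so that its top board occupies columns $1,\dots,n$ and its bottom board occupies columns $p+1,\dots,n+p$, and the second family so that its long board occupies columns $1,\dots,n+p$ and its short board occupies columns $p+1,\dots,n$. In either family the colour gadget of the first board then sits at the extreme left and that of the second board around column $p+1$, while the two boards overlap in the $n-p$ columns $p+1,\dots,n$. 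Call a column $i$ a \emph{common fault} if no horizontal domino of either board crosses the line between columns $i$ and $i+1$.

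The heart of the argument is the tail swap. Given a configuration of the first family that has a common fault, take the rightmost one, say at column $i$, and exchange the portions of the two boards lying strictly to the right of column $i$. Since the second board's colour gadget is essentially one column wide, this is legitimate once $i\ge p+1$, and it turns the length-$n$/length-$n$ pair into a length-$(n+p)$/length-$(n-p)$ pair whose rightmost common fault is again at column $i$; the construction is manifestly invertible. Hence, for each $i$ in the range $p+1\le i\le n$, the configurations of the first family with last common fault at column $i$ are in bijection with those of the second family with last common fault at column $i$, so that $G_n^2$ and $G_{n+p}G_{n-p}$ differ only by the signed contribution of the \emph{defect} configurations: those with no common fault in any column $\ge p+1$.

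It remains to weigh the defect configurations on each side. As is already visible for a pair of stacked $2\times k$ boards, having no common fault across a long stretch of columns is extremely rigid: the dominoes meeting the overlap lines are forced into one of two interlocking ``brick'' patterns, which in turn pins down part of the tiling near the ends of the overlap, so that the only real freedom left lies in the $p$ leftover columns of the first board on the left, the $p$ leftover columns of the second board on the right, and the two colour gadgets. Matching the two families and using that only one of the two brick patterns has the correct length parity, I expect the leftover stretches and the gadgets to combine into the single factor $f_{p-1}\bigl(G_0G_{p+1}-G_1G_p\bigr)$ and the surviving parity to produce the sign $(-1)^{n+p-1}$ --- exactly as for $p=1$ in Theorem~\ref{thm:old-1}, where the same dichotomy appeared as ``$n$ even versus $n$ odd''.

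The main obstacle is this endgame: correctly enumerating the defect configurations, checking that precisely one parity of brick pattern survives, and extracting both the factor $f_{p-1}$ and the coefficient $G_0G_{p+1}-G_1G_p$ together with the right sign. A convenient check along the way is the identity $G_0G_{p+1}-G_1G_p=f_{p-1}\bigl(G_0G_2-G_1^2\bigr)$, immediate from~\eqref{fgeq} and $f_p-f_{p-2}=f_{p-1}$; it recasts the statement as the Gibonacci Catalan identity $G_n^2-G_{n+p}G_{n-p}=(-1)^{n+p-1}f_{p-1}^{\,2}\bigl(G_0G_2-G_1^2\bigr)$, which collapses to Theorem~\ref{thm:old-1} at $p=1$ and to Catalan's identity for the Fibonacci numbers when $G_0=G_1=1$. (Alternatively one could prove it by induction on $p$ using Theorem~\ref{thm:old-1} and the two-term recurrence, but the tail-swap proof is the one genuinely parallel to Theorem~\ref{thm:old-1}.)
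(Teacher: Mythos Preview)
Your plan is exactly the paper's approach: the same offset pair of Gibonacci boards, the same tail swap at the last common fault in a column $i$ with $p+1\le i\le n$, and the same reduction to the defect configurations with no such fault. The ``endgame'' you flag as the main obstacle is handled in the paper not by any abstract analysis of brick patterns but by a direct four-way case split on the parities of $n$ and $p$: in each case one reads off which board is forced to have horizontal dominoes in columns $p+1,p+2$ (contributing a factor $G_0$) versus a vertical domino in column $p+1$ (contributing $G_1$), the free portions on the left and right contribute $G_p$ and $f_{p-1}$, and the fully fault-free configurations---which exist on exactly one side, depending on the parity of $n-p$---contribute $G_0G_{p-1}f_{p-1}$; collecting terms gives $\pm f_{p-1}(G_0G_{p+1}-G_1G_p)$ with the sign $(-1)^{n+p-1}$ emerging from the case split. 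So your outline is correct and matches the paper; only this bookkeeping remains.
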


\begin{proof}
The tilings of Set $(1)$ as shown in Figure \ref{nfig2} is in one-to-one correspondence with the tilings
of Set $(2)$ as shown in Figure \ref{nfig23}, when the last fault is at the
column $i$, where $p+1\leq i\leq n$. From columns $1$ to $p$ we get $G_p$ tilings. And from columns
$n+1$ to $n+p$ we get $f_{p-1}$ tilings.

\begin{figure}[!htb]
\includegraphics[width=0.9\textwidth]{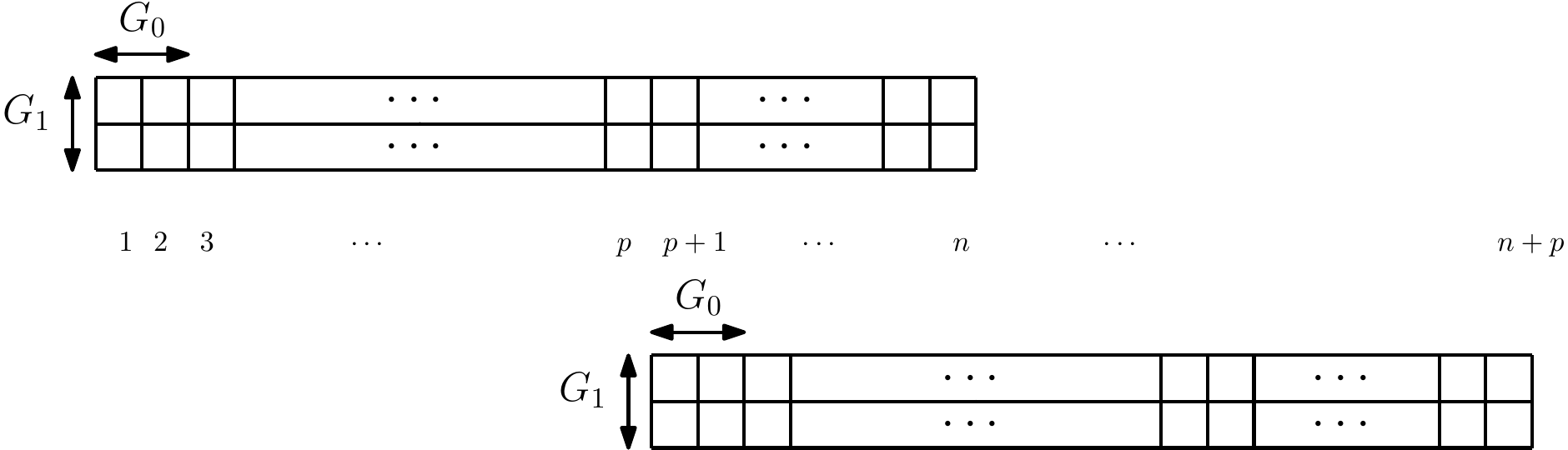}
\caption{Set $(1)$ of boards in the proof of Theorem \ref{thm:new-1}.} \label{nfig2}
\end{figure}

\begin{figure}[!htb]
\includegraphics[width=0.9\textwidth]{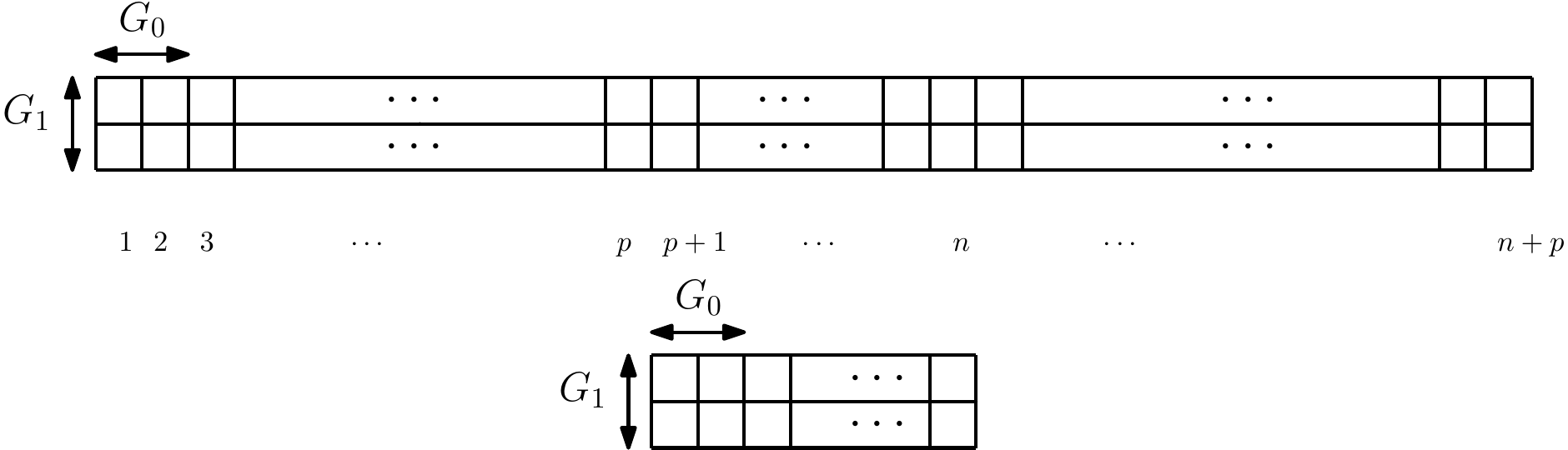}
\caption{Set $(2)$ of boards in the proof of Theorem \ref{thm:new-1}.} \label{nfig23}
\end{figure}

\textbf{Case 1: $n$ and $p$ are both odd.} In this case $n-p$ is even. So, when the last fault is at column $p$, then in Set $(1)$ the
columns $p+1$ and $p+2$ of the top board must occupy
horizontal dominoes; and the column $p+1$ of the bottom board must
occupy a vertical domino. So in the Set $(1)$ we get $G_1G_pf_{p-1}$ tilings. And therefore in the Set $(2)$ we get $G_0G_pf_{p-1}$ tilings. Also, there is no fault-free tilings in Set $(1)$ and there are
$G_0G_{p-1}f_{p-1}$ fault-free tilings in Set $(2)$.

Hence
$G_n^2-G_1G_pf_{p-1}=G_{n+p}G_{n-p}-G_0G_pf_{p-1}-G_0G_{p-1}f_{p-1}$, which implies $$ G_n^2=G_{n+p}G_{n-p}-f_{p-1}(G_0G_{p+1}-G_1G_p).$$

\textbf{Case 2: $n$ is odd and $p$ is even.} In this case $n-p$ is odd. So, when the last fault is at column $p$, then in Set $(1)$ the column $p+1$ of the top board must occupy vertical domino; and the
columns $p+1$ and $p+2$ of the bottom board must occupy horizontal dominoes. So in Set $(1)$ we get $G_0G_pf_{p-1}$ tilings. And therefore in
Set $(2)$ we get $G_1G_pf_{p-1}$ tilings. Also, there are $G_0G_{p-1}f_{p-1}$ fault-free tilings in Set $(1)$
and there is no fault-free tilings in Set $(2)$.

Hence
$G_n^2-G_0G_pf_{p-1}-G_0G_{p-1}f_{p-1}=G_{n+p}G_{n-p}-G_1G_pf_{p-1}$, which implies $$ G_n^2=G_{n+p}G_{n-p}+f_{p-1}(G_0G_{p+1}-G_1G_p).$$

\textbf{Case 3: Remaining cases.} In a similar way when $n$ is even and $p$ is odd implies
$$G_n^2=G_{n+p}G_{n-p}+f_{p-1}(G_0G_{p+1}-G_1G_p)$$ and $p$ is even implies
$$G_n^2=G_{n+p}G_{n-p}-f_{p-1}(G_0G_{p+1}-G_1G_p).$$

This proves the result.
\end{proof}

As corollaries we obtain the following results by taking $p=2$ and $p=n$ respectively.
\begin{corollary}
For all $n\in \mathbb{N}$, we have
\[
G_n^2=G_{n+2}G_{n-2}+(-1)^{n+1}(G_0G_2-G_1^2).
\]
In particular $$f_n^2=f_{n+2}f_{n-2}-(-1)^n.$$
\end{corollary}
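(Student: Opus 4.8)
The plan is to obtain this as the special case $p=2$ of Theorem~\ref{thm:new-1}, followed by a one-line Fibonacci specialization. First I would set $p=2$ in the identity
\[
G_n^2 = G_{n+p}G_{n-p} + (-1)^{n+p-1}f_{p-1}\bigl(G_0G_{p+1} - G_1G_p\bigr),
\]
which is permissible whenever $n\ge 2$ (the range in which $G_{n-2}$ is the ordinary Gibonacci term; for smaller indices one reads $G_{n-2}$ via the backward extension of the recurrence). This produces $G_n^2 = G_{n+2}G_{n-2} + (-1)^{n+1}f_1\bigl(G_0G_3 - G_1G_2\bigr)$.

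The next step is to simplify the two elementary ingredients. Since the $2\times 1$ board has a unique domino tiling, $f_1=1$, so that factor drops out. To rewrite $G_0G_3 - G_1G_2$ I would invoke the Gibonacci recurrence twice, namely $G_3 = G_2+G_1$ and $G_2 = G_1+G_0$, which gives
\[
G_0G_3 - G_1G_2 = G_0G_2 + G_0G_1 - G_1^2 - G_0G_1 = G_0G_2 - G_1^2 .
\]
Substituting back yields exactly $G_n^2 = G_{n+2}G_{n-2} + (-1)^{n+1}(G_0G_2 - G_1^2)$, the stated identity.

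Finally, for the displayed special case I would take $G_0=G_1=1$, so that $G_n=f_n$ for every $n$ (because $f_0=f_1=1$). Then $G_0G_2 - G_1^2 = f_2 - 1 = 1$ and $(-1)^{n+1}=-(-1)^n$, so the identity collapses to $f_n^2 = f_{n+2}f_{n-2} - (-1)^n$, as claimed. I do not anticipate any genuine obstacle: the only points needing a moment's care are the index arithmetic ($p-1=1$, $p+1=3$), the sign exponent $n+p-1$ collapsing to $n+1$ when $p=2$, and the small algebraic reduction $G_0G_3 - G_1G_2 = G_0G_2 - G_1^2$ above. All of the combinatorial content has already been carried out in the proof of Theorem~\ref{thm:new-1}, so this is purely a substitution-and-simplify argument.
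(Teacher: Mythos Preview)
Your proposal is correct and matches the paper's own approach exactly: the corollary is stated immediately after Theorem~\ref{thm:new-1} with the one-line justification ``taking $p=2$,'' and your write-up simply spells out the substitution, the simplification $f_1=1$, and the short recurrence computation $G_0G_3-G_1G_2=G_0G_2-G_1^2$ that the paper leaves implicit. The Fibonacci specialization via $G_0=G_1=1$ is likewise the intended one.
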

\begin{corollary}
For all $n\in \mathbb{N}$, we have
\[
G_n^2=G_{2n}G_0-f_{n-1}(G_0G_{n+1}-G_1G_n).
\]
\end{corollary}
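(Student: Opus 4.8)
The plan is to deduce this corollary directly from Theorem~\ref{thm:new-1}, which is already at our disposal. I would set $p=n$ in
\[
G_n^2 = G_{n+p}G_{n-p} + (-1)^{n+p-1} f_{p-1}\bigl(G_0 G_{p+1} - G_1 G_p\bigr),
\]
which turns $G_{n+p}G_{n-p}$ into $G_{2n}G_0$, replaces $f_{p-1}$ by $f_{n-1}$ and $G_{p+1},G_p$ by $G_{n+1},G_n$, and collapses the sign to $(-1)^{n+p-1}=(-1)^{2n-1}=-1$. Assembling these substitutions yields exactly
\[
G_n^2 = G_{2n}G_0 - f_{n-1}\bigl(G_0 G_{n+1} - G_1 G_n\bigr),
\]
so nothing further is needed. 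The one spot where a little care is required is the parity of $n+p-1$ at $p=n$: because $2n-1$ is odd, the correction term is \emph{subtracted} rather than added, and this is essentially the only place a sign slip could occur.

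If a self-contained combinatorial proof were wanted instead, I would specialize the tail-swapping argument behind Theorem~\ref{thm:new-1} to the degenerate case $p=n$. There Set~$(1)$ is again two $2\times n$ boards, while in Set~$(2)$ the $2\times(n-p)$ board degenerates to a $2\times 0$ board contributing the lone factor $G_0$ (the empty Gibonacci tiling), the $p=n$ extra columns appended on the right account for the factor $f_{n-1}$, and the analysis of the last fault landing at column $n$ reproduces the correction term $-f_{n-1}(G_0 G_{n+1}-G_1 G_n)$ exactly as in Cases~1--3 of that proof. I expect the substitution route to be by far the cleaner of the two, and in either case there is no genuine obstacle beyond keeping track of the sign and the two index collapses $n-p\mapsto 0$ and $n+p\mapsto 2n$.
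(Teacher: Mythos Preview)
Your proposal is correct and matches the paper's own proof: the corollary is obtained simply by setting $p=n$ in Theorem~\ref{thm:new-1}, exactly as you do, with the only care point being the sign $(-1)^{2n-1}=-1$, which you handle correctly.
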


\section{Other results related to Gibonacci numbers}\label{sec:five}

There is a big wealth of literature concerning tiling proofs of Fibonacci and Lucas identities. It would make the present work much longer if we survey all of this literature and apply our techniques to them and generalize the results to Gibonacci numbers. The aim of the present section is to just take three isolated such incidents and use the techniques to prove results about Gibonacci numbers. It appears that all of the three directions below that we take can be generalized much further to get more general and new identities involving Gibonacci numbers.

\subsection{Divisibility Properties of Gibonacci numbers} We have so far not said anything about the divisibility properties of Gibonacci numbers. There is a wealth of results for such divisibility properties of Fibonacci numbers. For Gibonacci numbers, it seems that it is difficult for prove strong divisibility results like those that exist for Fibonacci numbers. However, we can prove weaker results like the following theorem.

\begin{theorem}
For $G_m>1$ we have if $n=mr$ for some $r$, then $G_n\equiv G_{m-1}F_{(r-1)m} \pmod {G_m}$.
\end{theorem}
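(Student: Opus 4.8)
The plan is to establish, by a single tiling count, the sharper identity
\[
G_{mr}=G_m\,f_{m(r-1)}+G_{m-1}\,F_{(r-1)m},
\]
after which the asserted congruence is immediate, the first summand being a multiple of $G_m$. Throughout I would assume $m\ge 2$ (the case $m=1$ is just \eqref{fgeq} reduced modulo $G_1$), and note that the combinatorial set-up requires $G_0,G_1$ to be non-negative integers, so the hypothesis $G_m>1$ simply guarantees the modulus is at least $2$ and the statement non-vacuous.

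First I would take the marked $2\times mr$ board of Figure \ref{fig:gib-g} with its Gibonacci colouring, so that its weighted number of domino tilings is $G_{mr}$, and then split according to the behaviour of the tiling at the vertical line between columns $m$ and $m+1$. Columns $1,\dots,m$ contain $2m$ cells, so by the usual parity/checkerboard observation the number of horizontal dominoes crossing that line is even, hence $0$ or $2$ since at most one can cross in each of the two rows. If it is $0$ (a fault at column $m$), the tiling decomposes into the marked $2\times m$ board on the left, of weight $G_m$, and a plain $2\times m(r-1)$ board on the right, of weight $f_{m(r-1)}$, contributing $G_m f_{m(r-1)}$. If it is $2$, the two crossing dominoes (one per row) fill columns $m$ and $m+1$ completely, so the tiling decomposes into the marked $2\times(m-1)$ board on the left of weight $G_{m-1}$, the two forced dominoes, and a plain $2\times(m(r-1)-1)$ board on the right of weight $f_{m(r-1)-1}=F_{m(r-1)}=F_{(r-1)m}$, contributing $G_{m-1}F_{(r-1)m}$. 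Summing the two cases yields the displayed identity, and reducing modulo $G_m$ gives the theorem; one may also observe that this identity is an instance of the general addition formula $G_{s+t}=G_sF_{t+1}+G_{s-1}F_t$ with $s=m$ and $t=m(r-1)$, so an entirely algebraic proof from \eqref{fgeq} is equally available.

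The only delicate point is the bookkeeping at the small end. Claiming the left sub-board in the ``two crossing dominoes'' case has weight exactly $G_{m-1}$ uses that all marked squares lie in columns $1$ and $2$, which is automatic for $m\ge 3$ but for $m=2$ needs a one-line check that the single column $1$ indeed carries weight $G_1=G_{m-1}$ under the colouring; likewise the degenerate value $r=1$ is absorbed by the conventions $f_0=1$, $F_0=0$. I expect this tidying of edge cases to be the main (if minor) obstacle; the core argument is just the interpretation of Section \ref{sec:two} combined with the fault/parity reasoning already used repeatedly in the paper.
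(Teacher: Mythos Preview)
Your argument is correct, but it takes a different---and in fact simpler---route than the paper. The paper partitions the $2\times mr$ board into $r$ ``supertiles'' $S_1,\dots,S_r$ by cutting after every column $jm$, classifies each supertile as open or closed on each side, and then conditions on the location of the \emph{first} supertile that is closed on the left and open on the right; summing over that location yields
\[
G_{mr}=G_m\sum_{j=2}^{r-1}F_{m+1}^{\,j-2}F_mF_{(r-j)m}+G_mF_{m+1}^{\,r-1}+G_{m-1}F_{(r-1)m},
\]
and the congruence follows. You instead cut only once, at column $m$, and use the fault/parity dichotomy there to obtain the single addition formula $G_{mr}=G_m f_{m(r-1)}+G_{m-1}F_{(r-1)m}$, which is exactly Identity~38 of Benjamin--Quinn (and which the paper itself invokes later, in the proof of Theorem~\ref{thm:merca}). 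Your approach is cleaner for the bare congruence; what the supertile decomposition buys is a more refined explicit expression for the quotient $(G_{mr}-G_{m-1}F_{(r-1)m})/G_m$ in terms of powers of $F_{m+1}$, which is closer in spirit to the Benjamin--Rouse results the section is advertising. Your handling of the edge cases $m=2$ and $r=1$ is also fine.
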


\begin{proof}
We use the concept of supertiles as defined by Benjamin and Rouse \cite{BenjaminRouse1} to prove this result. Since $G_n=G_{mr}$ for some $r$. Let us now divide the $2\times mr$ board into $r$ segments which we call supertiles, say $S_1, S_2, \ldots, S_r$. This chopping of the board is done to the right of columns numbered $m, 2m , 3m, \ldots, (r-1)m$ as shown in Figure \ref{fig:super}. Such a chopping might result in two horizontal dominoes covering columns $jm$ and $jm+1$ for some $1\leq j \leq r-1$ being split. If this happens we say that such a supertile $S_j$ is open on the right and $S_{j+1}$ is open on the left. Otherwise we say it is closed.

\begin{figure}[!htb]
\includegraphics[width=0.9\textwidth]{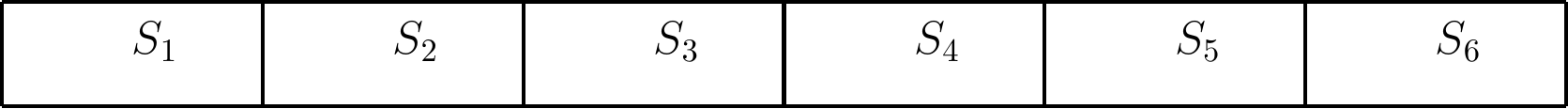}
\caption{Chopping of a $2\times mr$ board into supertiles with $r=6$.} \label{fig:super}
\end{figure}

Let us now look at the first supertile, say $S_j$ that is closed on the left and open on the right. For $1\leq j\leq r$, the number of tilings with $S_j$ being the first of this type is $G_mF_{m+1}^{j-2}F_mF_{(r-j)m}$. Thus we have
\[
G_n=G_{mr}=G_m\sum_{j=2}^{r-1}F_{m+1}^{j-2}F_mF_{(r-j)m}+G_mF_{m+1}^{r-1}+G_{m-1}F_{(r-1)m},
\]
which proves the result.
\end{proof}

Benjamin and Rouse \cite{BenjaminRouse1,BenjaminRouse} prove several more results of similar flavour for Fibonacci numbers (in fact, the congruences are much more strong) as well as generalized Lucas sequences. We do not explore this further here, but it seems that some of the results could no doubt be extended to our setting as well.

\subsection{Breakability of tilings}
The idea of breaking a tiling into two or more parts reveal several interesting identities relating Fibonacci numbers. This aspect has been used quite successfully by Benjamin, Carnes and Cloitre \cite{BenjaminCarnesCloitre} to prove identities involving sums of cubes of Fibonacci numbers. It seems possible to extend their results to prove analogous results for Gibonacci numbers, but we do not proceed in that direction here and leave it as an open problem for the reader. To give a flavour of the type of results they can prove using the concept of breakability of tilings we prove the following simple identity.

\begin{identity}
For all $n\geq 0$ we have,
\[
2\sum_{j=0}^n G_{3j+2}=G_{3n+4}-G_1
\]
\end{identity}
\begin{proof}
There are $G_1$ tilings of a $2\times (3n+4)$ board which cannot be broken at any column of the form $3j+2$ and that is the tiling which starts with a vertical domino and then has $n+1$ sequence of consecutive horizontal dominoes and a vertical domino. So, the right hand side counts the tilings of this board which can be broken at at least one column of the form $3j+2$. The left hand side is now counting the same thing by focusing on the last breakable column of the form $3j+2$, the factor of $2$ comes from the fact that the columns $3j+3$ and $4j+4$ can now be either tiled using two vertical dominoes or two horizontal dominoes.
\end{proof}

\subsection{Lacunary recurrence of Gibonacci numbers}

A recurrence relation involving terms of a given sequence with indices in arithmetic progression is called a lacunary recurrence. Recently, Ballantine and Merca \cite{bm} found such a lacunary recurrence for Fibonacci numbers, while the present authors \cite{ms} found one for the Lucas numbers. We prove the the result of Ballantine and Merca \cite{bm} using our combinatorial interpretation of Gibonacci numbers.

\begin{theorem}\cite[Theorem 1]{bm}\label{thm:merca}
Given a positive integer $N\geq 2$, we have
\[
F_n=F_N\cdot F_{N-1}^{\lfloor \frac{n-1}{N}\rfloor +1}\cdot F_{(n-1)~\text{mod}~N}+F_{N+1}\cdot F_{n-N}+F_N^2\cdot \sum_{k=2}^{\lfloor {\frac{n-1}{N} \rfloor}}F_{N-1}^{k-2}\cdot F_{n-kN},
\]
for all $n\geq N$.
\end{theorem}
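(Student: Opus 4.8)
The plan is to mimic the supertile argument already used in the divisibility section, but now without reducing modulo anything, so that the exact recurrence of Ballantine and Merca falls out from counting domino tilings of a $2\times n$ board. Recall that the number of domino tilings of a $2\times n$ board is $f_n=F_{n+1}$, so the claimed identity is, after re-indexing, just a statement about $f_{n-1}=F_n$ and similar shifted Fibonacci tiling numbers. I would first restate everything in terms of $f_m$: writing $F_m=f_{m-1}$ throughout turns the right-hand side into a clean product-and-sum expression in the $f$'s, and it is this tiling version I would prove.

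First I would take a $2\times(n-1)$ board (so it has $f_{n-1}=F_n$ tilings) and chop it into supertiles $S_1,S_2,\dots$ of width $N$ each, cutting immediately to the right of columns $N,2N,3N,\dots$, exactly as in Figure \ref{fig:super}. The last supertile has width $(n-1)\bmod N$ (interpret a width-zero final piece as empty). As in the proof above, a cut either splits a horizontal domino straddling columns $jN$ and $jN+1$ — in which case $S_j$ is open on the right and $S_{j+1}$ open on the left — or it does not, in which case that junction is closed. Each fully closed internal supertile of width $N$ contributes a factor $f_{N-1}=F_N$; each internal supertile that is open on the left and open on the right contributes a factor $F_{N-1}$ (two horizontal dominoes enter and leave, leaving a $2\times(N-2)$ strip, and $f_{N-2}=F_{N-1}$); the transition from "open" runs to the final "closed" event is where the $F_N$ and $F_{N-1}$ powers combine.

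The key step is then to classify every tiling by the position of the \emph{first closed cut} reading from the left, i.e. the smallest index of a supertile boundary that is not crossed by a horizontal domino. If that first closed cut is after supertile $S_k$, then $S_1,\dots,S_{k-1}$ are each open-on-the-left-or-start and open-on-the-right; the first of them ($S_1$) is forced to start with the run of horizontals and contributes an $f$-factor accounting for one free vertical-or-horizontal choice ($F_N$), while $S_2,\dots,S_{k-1}$ each contribute $F_{N-1}$, and $S_k$ (closed on the right) contributes $F_N$; everything after column $kN$ is an unrestricted $2\times(n-1-kN)$ board giving $f_{n-1-kN}$. Summing $k$ from $2$ up to $\lfloor (n-1)/N\rfloor$ produces the sum $F_N^2\sum F_{N-1}^{k-2}F_{n-kN}$; the term with no closed internal cut at all (every boundary crossed) gives the $F_N\cdot F_{N-1}^{\lfloor (n-1)/N\rfloor}\cdot F_{(n-1)\bmod N}$ term, where the final factor counts the tilings of the ragged last supertile of width $(n-1)\bmod N$; and the term where $S_1$ itself is already closed on the right gives $F_{N+1}\cdot f_{n-1-N}=F_{N+1}F_{n-N}$ after observing that $S_1$ closed simply means the first cut is a genuine break, leaving a free $2\times N$ piece ($f_N=F_{N+1}$) and a free $2\times(n-1-N)$ piece.

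I expect the main obstacle to be bookkeeping at the two ends: getting the exponent $\lfloor (n-1)/N\rfloor$ and the residue $(n-1)\bmod N$ to match exactly requires being careful about whether the terminal supertile has width $0$, $1$, or something in between (the case of width $1$ is special because a $2\times 1$ strip admits only a vertical domino, consistent with $f_1=1=F_2$, and width $0$ contributes the empty factor $f_0=1=F_1$), and about the off-by-one shift between "columns of a $2\times(n-1)$ board" and "Fibonacci indices." The combinatorial content — first closed cut, open supertiles contribute $F_{N-1}$, closed ones $F_N$ — is the same idea already deployed for the congruence theorem above, so once the indexing is pinned down the verification that the three grouped contributions reproduce the three terms of Theorem \ref{thm:merca} is routine. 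Finally, I would remark that the same supertile argument applied to the weighted board of Figure \ref{fig:gib-g} (assigning $G_0$ and $G_1$ colours to the distinguished dominoes) yields the Gibonacci generalization of Theorem \ref{thm:merca}, which we leave to the interested reader.
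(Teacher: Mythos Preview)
Your supertile approach is sound and will produce the identity, but it is organised differently from the paper's proof. The paper does not argue directly on a $2\times(n-1)$ board; instead it starts from the Gibonacci version of the ``break at column $N$'' identity $G_n=G_{n-N}f_N+G_{n-N-1}f_{N-1}$ (their Identity~38), iterates that relation \emph{algebraically} $d=\lfloor (n-1)/N\rfloor$ times, obtains a closed formula for $G_n$, and only then specialises $G\to f$ and re-indexes $f\to F$. Your argument is the combinatorial unfolding of that same iteration --- each algebraic substitution corresponds to conditioning on one more cut --- so the two proofs are morally the same recursion, but yours stays bijective throughout and avoids the specialise-then-reindex detour, while the paper's route makes the Gibonacci generalisation explicit from the outset rather than leaving it as a remark.

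One concrete correction to your bookkeeping (which you rightly flag as the delicate point): in the ``every boundary crossed'' case the doubly-open supertiles are $S_2,\dots,S_d$, so the exponent on $F_{N-1}$ is $\lfloor(n-1)/N\rfloor-1$, not $\lfloor(n-1)/N\rfloor$. This is exactly the exponent the paper's own derivation arrives at in its final displayed line. (Note that the exponent $\lfloor(n-1)/N\rfloor+1$ appearing in the \emph{statement} of Theorem~\ref{thm:merca} disagrees with both your count and the paper's derivation --- try $N=4$, $n=10$ --- so you should target the exponent $d-1$ that the proof actually establishes.)
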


\begin{proof}
Using a $2\times n$ board it is easy to see that $G_n=G_{n-N}f_N+G_{n-N-1}f_{n-1}$, which is a known identity (Identity 38 of Benjamin and Quinn \cite{bq}). Again we can show $G_{n-N-1}=G_{n-2N}f_{N-1}+G_{n-2N-1}f_{n-2}$ and $G_{n-2N-1}=G_{n-3N}f_{N-1}+G_{n-3N-1}f_{n-2}$ and so on. Doing
some simple calculations we get
\[
G_n=G_{n-N}f_N+f_{N-1}^2\sum_{i=2}^dG_{n-iN}f_{N-2}^{i-2}+f_{N-1}G_{n-dN-1}f_{N-2}^{d-1},
\]
where $d=\lfloor\frac{n}{N}-1\rfloor+1$.

In particular we have
\[
f_n=f_{n-N}f_N+f_{N-1}^2\sum_{i=2}^df_{n-iN}f_{N-2}^{i-2}+f_{N-1}f_{n-dN-1}f_{N-2}^{d-1}.
\]
Therefore
\[
F_{n+1}=F_{n-N+1}F_{N+1}+F_N^2\sum_{i=2}^dF_{n-iN+1}F_{N-1}^{i-2}+F_NF_{n-dN}F_{N-1}^{d-1}.
\]
Taking $n\rightarrow n-1$ we get
\[
F_n=F_{n-N}F_{N+1}+F_N^2\sum_{i=2}^dF_{n-iN}F_{N-1}^{i-2}+F_NF_{n-dN-1}F_{N-1}^{d-1}
\]
where $d=\lfloor \frac{n-1}{N}-1\rfloor +1=\lfloor \frac{n-1}{N}\rfloor $. Hence
\[
F_n=F_{n-N}F_{N+1}+F_N^2\sum_{i=2}^dF_{n-iN}F_{N-1}^{i-2}+F_NF_{(n-1)\mod N}F_{N-1}^{d-1},
\]
where $d=\lfloor \frac{n-1}{N}\rfloor $.
\end{proof}

We have stopped short of proving a more general lacunary recurrence involving only Gibonacci numbers, but we believe that this might be possible to prove using some of our techniques. We leave this as an open problem.

\section{Concluding Remarks}\label{sec:six}

\begin{enumerate}
    \item As we have seen already, several techniques available in the literature can be modified to use with our representation of Gibonacci numbers. A systematic study of all such methods used to prove Fibonacci identities would no doubt yield many new identities.
    \item Can we use the tiling interpretation of Gibonacci numbers to prove inequalities? Or to find a $t$ in terms of $a,b,n$, for which $f_{t-1}\leq
G_n^{a,b}<f_t$? \item Let us denote by
$G_n^{a,b}=af_{n-2}+bf_{n-1}, n\geq 1$, where $a>0,b>0$.
For any positive integer there exist at least one Gibonacci
sequence where this integer appears. For any positive integer $t$
can we find all the Gibonacci sequences where $t$ appears? That
is, can we find the solutions $(a,b,n)$ of the following equation
for any fix $t$, $G_n^{a,b}=t$? \item Can we say something for the
equation $G_n\equiv G_{n+x} \pmod p$, where $p$ is a prime? Can we
find a significant relation between $x$ and $p$? For example, we
have $G_n\equiv G_{n+3} \pmod 2$, $G_n\equiv G_{n+8} \pmod 3$,
$G_n\equiv G_{n+20} \pmod 5$, $G_n\equiv G_{n+16} \pmod 7$,
$G_n\equiv G_{n+10} \pmod {11}$, $G_n\equiv G_{n+14} \pmod {29}$,
etc. Results of these type for Fibonacci and generalized Fibonacci
type sequences were derived using non-combinatorial techniques by Laugier and the second author \cite{kmj}.
    \item Another aspect which is immediately clear is that we can extend some of our proof techniques to three or more term recurrences. Work in this direction will be reported in a forthcoming paper.
\end{enumerate}

\section*{Acknowledgements}
The second author is partially supported by the Leverhulme Trust Research Project Grant RPG-2019-083.

\bibliographystyle{alpha}

\end{document}